\documentclass[12pt]{amsart}

\usepackage{ucs}

\usepackage{amssymb}
\usepackage{amsthm}
\usepackage{amsmath}
\usepackage{latexsym}
\usepackage[cp1251]{inputenc}
\usepackage{graphicx}
\usepackage{wrapfig}
\usepackage{caption}
\usepackage{subcaption}
\usepackage{indentfirst}
\usepackage[left=2.6cm,right=2.6cm,top=2.6cm,bottom=2.6cm,bindingoffset=0cm]{geometry}
\usepackage{enumerate}
\usepackage{makecell}

\DeclareMathOperator{\aut}{Aut}

\DeclareMathOperator{\cyc}{Cyc}

\DeclareMathOperator{\orb}{Orb}

\DeclareMathOperator{\Span}{Span}

\DeclareMathOperator{\sym}{Sym}

\makeatletter 
\def\@seccntformat#1{\csname the#1\endcsname. } 
\def\@biblabel#1{#1.}

\makeatother

\title{On nilpotent Schur groups}

\author{Grigory Ryabov}
\address{Sobolev Institute of Mathematics, Novosibirsk, Russia}
\address{Novosibirsk State Technical University, Novosibirsk, Russia}

\email{gric2ryabov@gmail.com}

\thanks{The work is supported by Russian Scientific Fund (project No.~22-71-00021)}

\date{}

\newtheorem{prop}{Proposition}[section]

\newtheorem{lemm}[prop]{Lemma}
\newtheorem{theo}[prop]{Theorem}
\newtheorem*{ques}{Question}

\theoremstyle{definition}

\newtheorem*{prob}{Problem}

\begin{document}

\vspace{\baselineskip}
\vspace{\baselineskip}

\vspace{\baselineskip}

\vspace{\baselineskip}

\begin{abstract}
A finite group $G$ is called a \emph{Schur} group if every $S$-ring over $G$ is \emph{schurian}, i.e. associated in a natural way with a subgroup of $\sym(G)$ that contains all right translations. We prove that every nonabelian nilpotent Schur group belongs to one of the explicitly given families of groups. 
\\
\\
\textbf{Keywords}: Schur rings, Schur groups, nilpotent groups.
\\
\\
\textbf{MSC}: 05E30, 20B25. 
\end{abstract}

\maketitle

\section{Introduction}

A \emph{Schur ring} or \emph{$S$-ring} over a finite group $G$ can be defined as a subring of the group ring $\mathbb{Z}G$ that is a free $\mathbb{Z}$-module spanned by a partition of $G$ closed under taking inverse and containing the identity element $e$ of $G$ as a class (see Section~2 for the exact definition). The theory of $S$-rings was initiated by Schur~\cite{Schur}, and further it was developed by Wielandt~\cite{Wi}. Schur and Wielandt used $S$-rings for studying permutation groups. In particular, Schur proved a generalization of the Burnside theorem on a permutation group with a regular cyclic $p$-subgroup using $S$-rings~\cite[Theorem~25.3]{Wi}.

With every group $K\leq \sym(G)$ containing all right translations of~$G$, one can associate the $S$-ring determined by the partition of $G$ into the orbits of the stabilizer of~$e$ in~$K$. Every $S$-ring obtained in such way was called \emph{schurian} in~\cite{Po}. Wielandt wrote in~\cite{Wi2} that ``Schur had conjectured for a long time that every $S$-ring is determined by a suitable permutation group'', in our terms that every $S$-ring is schurian. However, this conjecture was disproved in~\cite{Wi} by Wielandt. In~\cite{Po}, P\"{o}schel introduced the following definition. The group $G$ is called a \emph{Schur} group if every $S$-ring over $G$ is schurian. He also posed the next problem.

\begin{prob}
\emph{Determine all Schur groups.} 
\end{prob}

To prove that a given group $G$ is Schur, it is necessary to check that every $S$-ring over $G$ is schurian. The main difficulty of the above problem is that the number of $S$-rings over $G$ is exponential in the order of $G$ in general, and sometimes it is not easy to obtain a description of all $S$-rings over~$G$. On the other hand, to prove that $G$ is not Schur, it is necessary to find a nonschurian $S$-ring over $G$. However, in many cases it is unclear how to find such $S$-ring. 

The first result on the schurity problem was obtained by  P\"{o}schel~\cite{Po}. He proved that cyclic $p$-groups of odd order are Schur. Using this result, Klin and P\"{o}schel solved the isomorphism for Cayley graphs over these groups~\cite{KP}. Schurity of cyclic $2$-groups was proved in~\cite{GNP}. The complete classification of cyclic Schur groups was obtained in~\cite{EKP1}. 

Strong necessary conditions of schurity for abelian groups were obtained in~\cite{EKP2}. Schurity of some infinite families of noncyclic abelian groups was verified in~\cite{MP2,Ry2}. Abelian Schur groups of odd order were classified in~\cite{PR}. Finally, the complete classification of abelian Schur groups was obtained in~\cite{Ry3}.

All Schur groups of order at most~$63$ were enumerated in~\cite{Ziv}. It turns out that there exist nonabelian Schur groups. However, it is not known to the present moment whether there exists an infinite family of nonabelian Schur groups. The attempts to describe all $S$-rings over some nonabelian groups, e.g. dihedral groups or groups whose order has few prime divisors, lead to such hard unsolved problems as the problem of existence of a difference set in a cyclic group and the problem of computing cyclotomic numbers (see~\cite{MP2}).

Nevertheless, there are some general results on nonabelian Schur groups. In~\cite{PV}, it was proved that every nonabelian Schur group $G$ is metabelian and the number of distinct prime divisors of $|G|$ is at most~$7$. Some partial results one nonabelian Schur groups were obtained in~\cite{MP2,Ry1}. In particular, in~\cite{Ry1} it was proved that every Schur $p$-group of odd order must be abelian and in~\cite{MP2} it was proved that every nonabelian $2$-group of order at least~$32$ must be dihedral.

In this paper, we prove that every nilpotent Schur group belongs to one of the explicitly given families of groups. The cyclic and dihedral groups of order~$n$ are denoted by $\mathbb{Z}_n$ and $D_n$, respectively. The quaternion group and the central product of $D_8$ and $\mathbb{Z}_4$ are denoted by $Q_8$ and $G_{16}$, respectively. The main result of the paper is the following theorem.

\begin{theo}\label{main}
A nonabelian nilpotent Schur group is isomorphic to one of the groups below:
\\
\\
$(1)$ $Q_8$, $G_{16}$, $D_{2^k}$, where $k\geq 3$,
\\
\\
$(2)$ $Q_8\times \mathbb{Z}_p$, where $p\geq 11$ is a prime such that $p\not\equiv1\mod 4$ and $p\not\equiv1\mod 6$.
\\
\\
Moreover, the groups $Q_8$, $G_{16}$, and $D_{2^k}$, where $3\leq k\leq 5$, are Schur.  
\end{theo}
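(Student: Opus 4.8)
The statement combines a classification (the forward implication) with a verification that a specific finite list of groups really is Schur, and I would establish these separately, with essentially all of the difficulty residing in the former.

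For the classification, the plan is to start from the fact that direct factors of a Schur group are again Schur: if $\mathcal A$ is a nonschurian $S$-ring over a factor, then the tensor product of $\mathcal A$ with the full group ring of the complementary factor is a nonschurian $S$-ring over the whole group. Since $G$ is nilpotent it is the direct product of its Sylow subgroups, so each Sylow subgroup of $G$ is Schur. By \cite{Ry1} an odd-order Schur $p$-group is abelian, hence every odd Sylow subgroup of $G$ is abelian; as $G$ is nonabelian, its Sylow $2$-subgroup $P$ must be nonabelian. Thus $G=P\times A$ with $P$ a nonabelian Schur $2$-group and $A$ an abelian $2'$-group. To pin down $P$ I would invoke \cite{MP2}, by which a nonabelian Schur $2$-group of order at least $32$ is dihedral, while for orders $8$ and $16$ the enumeration of \cite{Ziv} leaves only $Q_8,D_8$ and $G_{16},D_{16}$. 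Hence $P\in\{Q_8,G_{16}\}\cup\{D_{2^k}:k\ge 3\}$, which already accounts for family $(1)$ in the case $A=1$.

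The hard step is to understand a nontrivial odd part and to show it forces $P=Q_8$ and $A=\mathbb Z_p$ with the stated congruences. The mechanism I expect is that $Q_8$ is distinguished among the candidate $2$-groups by being Hamiltonian (equivalently, by possessing a unique involution), which severely restricts the $S$-rings over $Q_8\times A$; for $D_{2^k}$ and $G_{16}$ one should instead be able to build a nonschurian $S$-ring over the product for any nontrivial $A$, so that these admit no nontrivial odd extension. For $P=Q_8$ the plan is to analyse $S$-rings over $Q_8\times A$ through their tensor and generalized wreath decompositions and thereby reduce to $A$ cyclic of prime order; the residue conditions $p\not\equiv1\pmod 4$ and $p\not\equiv1\pmod 6$ should emerge as exactly the obstructions to the cyclotomic (quartic- and sextic-residue) $S$-rings on the $\mathbb Z_p$-factor whose presence would lift to a nonschurian $S$-ring on $Q_8\times\mathbb Z_p$, while the bound $p\ge 11$ excludes the sporadic primes $3,5,7$, which fail by a separate finite check. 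Producing an explicit nonschurian $S$-ring in each excluded configuration is the principal obstacle, since this is the inherently difficult direction of the schurity problem flagged in the introduction.

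For the verification that $Q_8$, $G_{16}$, and $D_{2^k}$ with $3\le k\le 5$ are Schur, I observe that these groups have orders $8,16,16,16,32$, all at most $63$, so schurity for each is already contained in the complete enumeration of Schur groups of order at most $63$ in \cite{Ziv}, and it suffices to read off the relevant entries. The dihedral groups $D_{2^k}$ with $k\ge 6$ have order at least $64$ and lie beyond that enumeration, which is consistent with the theorem asserting Schurity only for $k\le 5$.
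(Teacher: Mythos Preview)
Your high-level strategy matches the paper's, but three concrete mechanisms are missing, and one of your proposed routes would not work as stated. First, you obtain only that the odd part $A$ is abelian and then propose to reach ``$A$ cyclic of prime order'' by analysing tensor and generalized wreath decompositions of $S$-rings over $Q_8\times A$. The paper does not do this: cyclicity of $A$ is obtained in one line by citing \cite[Corollary~4.3]{PV} (flagged in the introduction as a key ingredient), and the reduction from cyclic $A$ to $|A|$ prime is achieved by exhibiting a regular copy of $Q_8$ inside the automorphism group of a specific nonschurian $S$-ring over $\mathbb{Z}_4\times\mathbb{Z}_2\times\mathbb{Z}_{pq}$ taken from \cite{EKP2}, thereby showing $Q_8\times\mathbb{Z}_{pq}$ is non-Schur for all odd primes $p,q$. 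Your proposed decomposition analysis would amount to classifying all $S$-rings over $Q_8\times A$, which is the hard direction and not what is needed; the paper works entirely by constructing or transporting explicit nonschurian $S$-rings.

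Second, to exclude $P\in\{G_{16}\}\cup\{D_{2^k}:k\ge 3\}$ with a nontrivial odd part, the paper does not build a nonschurian $S$-ring ``for any nontrivial $A$'' as you suggest: it observes that each such $P$ contains $D_8$, so $G$ has a subgroup $D_8\times\mathbb{Z}_p$, and then produces one explicit nonschurian $S$-ring over $D_8\times\mathbb{Z}_p$ (Proposition~3.1). Your cyclotomic intuition for the congruence conditions is correct and matches Proposition~4.1 (quartic and sextic residue constructions on $\mathbb{Z}_p$ lifted to $Q_8\times\mathbb{Z}_p$). Two minor corrections: the primes $5$ and $7$ are already excluded by the congruences ($5\equiv 1\bmod 4$, $7\equiv 1\bmod 6$), so only $p=3$ needs a separate finite check (via \cite[Lemma~3.1]{PV}); and for the Schurity of $Q_8$, $G_{16}$, $D_8$, $D_{16}$, $D_{32}$ the paper cites \cite[Theorem~4.2]{PV} rather than \cite{Ziv}, though either reference suffices.
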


We do not know whether infinite families of groups from Theorem~\ref{main} consist of Schur groups. So we suggest the following question.

\begin{ques}
Are the groups $D_{2^k}$, $k\geq 6$, and $Q_8\times \mathbb{Z}_p$, $p\not\equiv1\mod 4$, $p\not\equiv1\mod 6$, Schur?
\end{ques}

\noindent It should be mentioned that all $S$-rings of rank at most~$5$ over $D_{2^k}$, $k\geq 3$, were classified in~\cite{MP2}.

We finish the introduction with a brief outline of the paper. Section~$2$ contains a necessary background on $S$-rings. In Sections~$3$ and~$4$, we construct new infinite families of nonschurian $S$-rings over the groups $D_8\times \mathbb{Z}_p$ and $Q_8\times \mathbb{Z}_p$, where $p$ is prime; in case of $Q_8\times \mathbb{Z}_p$, we also assume that $p\equiv1\mod 4$ or $p\equiv1\mod 6$, respectively. The latter condition on~$p$ is substantial for constructing a nonschurian $S$-ring over $Q_8\times \mathbb{Z}_p$. Finally, we prove Theorem~\ref{main} in Section~$5$. One of the key ingredients of the proof is~\cite[Corollary~4.3]{PV}, which provides some necessary conditions of schurity for nilpotent groups.

The author would like to thank Dr. M. Ziv-Av for the help with computer calculations.

\section{Preliminaries}

In this section, we provide all necessary definitions and statements on $S$-rings. All of them can be found, e.g., in~\cite{MP2,Ry2}.

Let $G$ be a finite group and $\mathbb{Z}G$  the integer group ring. The identity element and the set of all nonidentity elements of $G$ are denoted by~$e$ and~$G^\#$, respectively. The symmetric group of the set $G$ is denoted by~$\sym(G)$. If $K\leq \sym(G)$, then the set of all orbits of $K$ on $G$ is denoted by $\orb(K,G)$. The subgroup of $\sym(G)$ induced by the right multiplications of $G$ is denoted by $G_{r}$. If $X\subseteq G$, then the element $\sum \limits_{x\in X} {x}$ of the group ring $\mathbb{Z}G$ is denoted by~$\underline{X}$. The set $\{x^{-1}:x\in X\}$ is denoted by $X^{-1}$.

A subring  $\mathcal{A}\subseteq \mathbb{Z} G$ is called an \emph{$S$-ring} (a \emph{Schur} ring) over $G$ if there exists a partition $\mathcal{S}=\mathcal{S}(\mathcal{A})$ of~$G$ such that:

$(1)$ $\{e\}\in\mathcal{S}$;

$(2)$  if $X\in\mathcal{S}$, then $X^{-1}\in\mathcal{S}$;

$(3)$ $\mathcal{A}=\Span_{\mathbb{Z}}\{\underline{X}:\ X\in\mathcal{S}\}$.

\noindent The elements of $\mathcal{S}$ are called the \emph{basic sets} of  $\mathcal{A}$. A set $X \subseteq G$ is called an \emph{$\mathcal{A}$-set} if $\underline{X}\in \mathcal{A}$. A subgroup $A \leq G$ is called an \emph{$\mathcal{A}$-subgroup} if $A$ is an $\mathcal{A}$-set. 

Let $\{e\}\leq A \unlhd B\leq G$. A section $B/A$ is called an \emph{$\mathcal{A}$-section} if $B$ and $A$ are $\mathcal{A}$-subgroups. If $S=B/A$ is an $\mathcal{A}$-section, then the module
$$\mathcal{A}_S=Span_{\mathbb{Z}}\left\{\underline{X}^{\pi}:~X\in\mathcal{S}(\mathcal{A}),~X\subseteq B\right\},$$
where $\pi:B\rightarrow B/A$ is the canonical epimorphism, is an $S$-ring over $S$.

Let $S=B/A$ be an $\mathcal{A}$-section of $G$. The $S$-ring~$\mathcal{A}$ is called the \emph{$S$-wreath product} of $\mathcal{A}_B$ and $\mathcal{A}_{G/A}$ if $A\trianglelefteq G$ and every basic set $X$ of $\mathcal{A}$ outside~$B$ is a union of some $A$-cosets. The $S$-wreath product is called \emph{nontrivial} or \emph{proper}  if $A\neq \{e\}$ and $B\neq G$. 

Let $A$ and $B$ be $\mathcal{A}$-subgroups such that $G=A\times B$. The $S$-ring~$\mathcal{A}$ is called the \emph{tensor product} of $\mathcal{A}_A$ and $\mathcal{A}_B$ if 
$$\mathcal{S}(\mathcal{A})=\{X_1\times X_2:~X_1\in\mathcal{S}(\mathcal{A}_A),~X_2\in \mathcal{S}(\mathcal{A}_B)\}.$$
In this case we write $\mathcal{A}=\mathcal{A}_A\otimes \mathcal{A}_B$. The tensor product is called \emph{nontrivial} if $\{e\}<A<G$ and $\{e\}<B<G$.

Let $X,Y\in\mathcal{S}$. If $Z\in \mathcal{S}$, then the number of distinct representations of $z\in Z$ in the form $z=xy$ with $x\in X$ and $y\in Y$ does not depend on the choice of $z\in Z$. Denote this number by $c^Z_{XY}$. One can see that $\underline{X}~\underline{Y}=\sum_{Z\in \mathcal{S}(\mathcal{A})}c^Z_{XY}\underline{Z}$. Therefore the numbers  $c^Z_{XY}$ are the structure constants of $\mathcal{A}$ with respect to the basis $\{\underline{X}:\ X\in\mathcal{S}\}$. It can be verified that
\begin{equation}\label{triangle}
|Z|c^{Z^{-1}}_{XY}=|X|c^{X^{-1}}_{YZ}=|Y|c^{Y^{-1}}_{ZX}
\end{equation}
for all $X,Y,Z\in \mathcal{S}(\mathcal{A})$ (see, e.g.,~\cite[Lemma~2.3]{Ry2}).

Let $\mathcal{A}^\prime$ be an $S$-ring over a group $G^\prime$. A bijection $f$ from $\mathcal{S}(\mathcal{A})$ to $\mathcal{S}(\mathcal{A}^\prime)$ is called an \emph{algebraic isomorphism} from $\mathcal{A}$ to $\mathcal{A}^\prime$ if $c_{X^fY^f}^{Z^f}=c_{XY}^Z$ for all $X,Y,Z\in \mathcal{S}(\mathcal{A})$. An algebraic isomorphism from $\mathcal{A}$ to itself is called an \emph{algebraic automorphism} of $\mathcal{A}$. The following statement is known as the first Schur theorem on multipliers (see~\cite[Theorem~23.9, (a)]{Wi}).

\begin{lemm} \label{burn}
Let $\mathcal{A}$ be an $S$-ring over an abelian group  $G$. Then the mapping $X\mapsto X^{(m)}$, $X\in \mathcal{S}(\mathcal{A})$, is an algebraic automorphism of $\mathcal{A}$ for every  $m\in \mathbb{Z}$ coprime to $|G|$.
\end{lemm}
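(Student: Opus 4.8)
The plan is to realize the power map as a genuine automorphism of the group and then reduce everything to a single invariance statement about $\mathcal{A}$. Since $G$ is abelian and $\gcd(m,|G|)=1$, the map $\sigma_m\colon g\mapsto g^m$ is an automorphism of $G$, and it extends linearly to a ring automorphism $L_m$ of $\mathbb{Z}G$ with $L_m(\underline{X})=\underline{X^{(m)}}$. The crucial observation is that once I know $L_m$ maps $\mathcal{A}$ onto itself, the algebraic-automorphism property is automatic: applying the ring homomorphism $L_m$ to the defining relation $\underline{X}\,\underline{Y}=\sum_{Z}c^Z_{XY}\underline{Z}$ gives $\underline{X^{(m)}}\,\underline{Y^{(m)}}=\sum_{Z}c^Z_{XY}\underline{Z^{(m)}}$, and comparing this with the expansion of $\underline{X^{(m)}}\,\underline{Y^{(m)}}$ in the basic-set basis yields $c^{Z^{(m)}}_{X^{(m)}Y^{(m)}}=c^Z_{XY}$. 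So the entire content is to prove $L_m(\mathcal{A})=\mathcal{A}$, together with the fact that $L_m$ sends basic sets to basic sets.

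Next I would reduce to the case where $m=p$ is a prime not dividing $|G|$. Writing $m=\pm\prod_i p_i^{a_i}$ with each $p_i$ coprime to $|G|$ (which is automatic since $\gcd(m,|G|)=1$), the map $L_m$ is a composition of the maps $L_{p_i}$ and, possibly, of $L_{-1}$; and $L_{-1}$ already preserves $\mathcal{A}$ because $X^{(-1)}=X^{-1}$ is a basic set by axiom $(2)$ in the definition of an $S$-ring. Hence it suffices to show $L_p(\mathcal{A})\subseteq\mathcal{A}$ for every prime $p\nmid|G|$.

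The heart of the argument is a Frobenius computation modulo $p$. Since $\mathcal{A}$ is a subring, $\underline{X}^{\,p}\in\mathcal{A}$ for every basic set $X$. Working in $\mathbb{F}_pG=\mathbb{Z}G/p\mathbb{Z}G$, which is commutative because $G$ is abelian, the identity $(\sum_i u_i)^{p}=\sum_i u_i^{p}$, valid in any commutative ring of characteristic $p$, gives
\[
\Big(\sum_{x\in X}x\Big)^{p}\equiv\sum_{x\in X}x^{p}\pmod{p},
\]
so that $\underline{X}^{\,p}\equiv\underline{X^{(p)}}\pmod{p\mathbb{Z}G}$; here the elements $x^{p}$ are distinct because $\sigma_p$ is injective. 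Writing $\underline{X}^{\,p}=\sum_g b_g g$ with $b_g\in\mathbb{Z}_{\ge 0}$, this congruence says precisely that $X^{(p)}=\{g:b_g\equiv 1\pmod p\}$.

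Finally I would invoke the Schur--Wielandt principle: for any element $\sum_g b_g g\in\mathcal{A}$ and any integer $v$, the set $\{g:b_g=v\}$ is an $\mathcal{A}$-set. Applying it to $\underline{X}^{\,p}\in\mathcal{A}$ and summing the finitely many layers with $b_g\equiv 1\pmod p$ shows that $X^{(p)}$ is an $\mathcal{A}$-set, i.e. $L_p(\mathcal{A})\subseteq\mathcal{A}$. Composing over the prime factorization of $m$ gives $L_m(\mathcal{A})\subseteq\mathcal{A}$, and applying the same to the inverse residue $m'$ with $mm'\equiv 1\pmod{|G|}$ (so that $L_{m'}=L_m^{-1}$) upgrades this to $L_m(\mathcal{A})=\mathcal{A}$. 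Since $\sigma_m$ is a bijection of $G$ preserving inclusion, $L_m$ carries minimal nonempty $\mathcal{A}$-sets to minimal nonempty $\mathcal{A}$-sets, hence basic sets to basic sets, and the computation of the first paragraph then completes the proof. I expect the Frobenius-plus-Schur--Wielandt step to be the main obstacle: establishing (or citing) the Schur--Wielandt principle and correctly reading off the support of $X^{(p)}$ from the congruence is where the real work lies, whereas the passage from ``$\mathcal{A}$-invariant'' to ``algebraic automorphism'' is formal precisely because $G$ is abelian.
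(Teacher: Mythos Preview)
The paper does not prove this lemma; it simply cites it as the first Schur theorem on multipliers, referring to \cite[Theorem~23.9,~(a)]{Wi}. Your argument is correct and is precisely the classical proof one finds in Wielandt: reduce to a prime $p\nmid |G|$, use the Frobenius identity in the commutative ring $\mathbb{F}_pG$ to get $\underline{X}^{\,p}\equiv\underline{X^{(p)}}\pmod p$, and then read off $X^{(p)}$ as an $\mathcal{A}$-set via the Schur--Wielandt principle; the passage from ``$L_m$ preserves $\mathcal{A}$'' to ``algebraic automorphism'' is exactly the formal computation you describe.
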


Put $\mathcal{R}(\mathcal{A})=\{r(X):~X\in \mathcal{S}(\mathcal{A})\}$, where $r(X)=\{(g,xg):~x\in X,~g\in G\}$. A bijection $\alpha$ from $G$ to $G^\prime$ is called a \emph{(combinatorial) isomorphism} from $\mathcal{A}$ to $\mathcal{A}^\prime$ if $\mathcal{R}(\mathcal{A})^\alpha=\mathcal{R}(\mathcal{A}^\prime)$, where $\mathcal{R}(\mathcal{A})^\alpha=\{r(X)^\alpha:~X\in \mathcal{S}(\mathcal{A})\}$ and $r(X)^\alpha=\{(g^\alpha,h^\alpha):~(g,h)\in r(X)\}$. If there exists an isomorphism from $\mathcal{A}$ to $\mathcal{A}^\prime$, then $\mathcal{A}$ and $\mathcal{A}^\prime$ are called \emph{isomorphic}.

A bijection $\alpha\in\sym(G)$ is defined to be a \emph{(combinatorial) automorphism} of $\mathcal{A}$ if $r(X)^\alpha=r(X)$ for every $X\in \mathcal{S}(\mathcal{A})$. The set of all automorphisms of $\mathcal{A}$ forms the group called the \emph{automorphism group} of $\mathcal{A}$ and denoted by $\aut(\mathcal{A})$. One can see that $\aut(\mathcal{A})\geq G_r$ and $\mathcal{A}$ is isomorphic to an $S$-ring over a group~$H$ if and only if $\aut(\mathcal{A})$ has a regular subgroup isomorphic to~$H$. If $\alpha\in \aut(\mathcal{A})$, then 
\begin{equation}\label{aut}
(Xy)^\alpha=Xy^\alpha
\end{equation}
for every $X\in \mathcal{S}(\mathcal{A})$ and $y\in G$. If $A$ is an $\mathcal{A}$-subgroup of $G$, then the set of all right $A$-cosets is an imprimitivity system of~$\aut(\mathcal{A})$, and if $\alpha\in \aut(\mathcal{A})$, then $\alpha^{G/A}$ is denoted the permutation induced by $\alpha$ on $G/A$. If $\mathcal{A}=\mathcal{A}_A\otimes \mathcal{A}_B$, then $\aut(\mathcal{A})=\aut(\mathcal{A}_A)\times \aut(\mathcal{A}_B)$.

Let $K$ be a subgroup of $\sym(G)$ containing $G_{r}$. Schur proved in~\cite{Schur} that the $\mathbb{Z}$-submodule
$$V(K,G)=\Span_{\mathbb{Z}}\{\underline{X}:~X\in \orb(K_e,~G)\},$$
where $K_e$ is a stabilizer of $e$ in $K$, is an $S$-ring over $G$. An $S$-ring $\mathcal{A}$ over  $G$ is called \emph{schurian} if $\mathcal{A}=V(K,G)$ for some $K\leq \sym(G)$ with $K\geq G_{r}$. One can verify that $\mathcal{A}$ is schurian if and only if $\mathcal{A}=V(\aut(\mathcal{A}),G)$, or equivalently, $\mathcal{S}(\mathcal{A})=\orb(\aut(\mathcal{A})_e,G)$. Clearly, two isomorphic $S$-rings are schurian or not simultaneously.

Let $K \leq \aut(G)$. Then $\orb(K,G)$ forms a partition of  $G$ that defines the $S$-ring $\mathcal{A}$ over~$G$. In this case  $\mathcal{A}$ is called \emph{cyclotomic} and denoted by $\cyc(K,G)$. If $\mathcal{A}=\cyc(K,G)$ for some $K\leq \aut(G)$, then $\mathcal{A}=V(G_rK,G)$. So every cyclotomic $S$-ring is schurian.

The group $G$ is called a \emph{Schur} group if every $S$-ring over $G$ is schurian. A section (in particular, a subgroup) of a Schur group is also Schur (see, e.g.,~\cite[Theorem~2.4]{PV}).

\section{Nonschurity of $D_8\times \mathbb{Z}_p$}

The main result of this section is the following proposition.

\begin{prop}\label{d8cp}
The group $D_8\times \mathbb{Z}_p$ is not Schur for every prime~$p$. 
\end{prop}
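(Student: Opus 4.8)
The plan is to construct an explicit nonschurian $S$-ring over $G=D_8\times\mathbb{Z}_p$ and verify directly that its basic partition cannot be the orbit partition of any point stabilizer. To prove nonschurity, recall that $\mathcal{A}$ is schurian if and only if $\mathcal{S}(\mathcal{A})=\orb(\aut(\mathcal{A})_e,G)$; so the strategy is to build an $S$-ring $\mathcal{A}$ for which $\aut(\mathcal{A})_e$ has \emph{strictly fewer} orbits (i.e. orbits that merge some basic sets) than the prescribed partition $\mathcal{S}(\mathcal{A})$. Equivalently, I would exhibit a basic set $X$ and show that $\aut(\mathcal{A})_e$ acts transitively on a set strictly larger than $X$, forcing the schurian closure to coarsen $\mathcal{S}(\mathcal{A})$.

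First I would fix notation and a good combinatorial model. Write $D_8=\langle a,b\mid a^4=b^2=e,\ b a b=a^{-1}\rangle$ and $\mathbb{Z}_p=\langle c\rangle$. Since $p$ is odd, $D_8$ and $\mathbb{Z}_p$ have coprime orders, and the natural candidates are $S$-rings of the form $\mathcal{A}=\mathcal{A}_{D_8}\otimes\mathcal{A}_{\mathbb{Z}_p}$ or, more usefully here, a proper $S$-wreath product over some section. The key idea is to use the cyclic factor to encode a ``twist'' that is invisible to any combinatorial automorphism. I would define $\mathcal{A}$ so that on the normal subgroup $\langle a\rangle\times\mathbb{Z}_p\cong\mathbb{Z}_4\times\mathbb{Z}_p$ it restricts to a standard (e.g. cyclotomic, hence schurian) $S$-ring, but the basic sets of $\mathcal{A}$ lying in the nontrivial coset of $\langle a\rangle\times\mathbb{Z}_p$ are chosen to distinguish elements that the automorphism group is forced to permute. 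The construction must respect axioms (1)--(3): the partition must contain $\{e\}$, be inverse-closed, and the structure constants must be well-defined, which I would verify using relation~\eqref{triangle} and the tensor/wreath machinery from Section~2.

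Next, to prove nonschurity I would compute $\aut(\mathcal{A})$, or rather enough of its point stabilizer $\aut(\mathcal{A})_e$ to locate its orbits. By~\eqref{aut}, every automorphism preserves each $r(X)$, so the orbit of $e$ under $\aut(\mathcal{A})_e$ is contained in each basic set's action; the crucial computation is to show that the imprimitivity system given by the $\mathcal{A}$-subgroup cosets, combined with the tensor-product fact $\aut(\mathcal{A}_A\otimes\mathcal{A}_B)=\aut(\mathcal{A}_A)\times\aut(\mathcal{A}_B)$, forces $\aut(\mathcal{A})_e$ to act on some designated union of basic sets as one transitive block. The contradiction with schurity then follows because the merged orbit is not a single basic set of $\mathcal{S}(\mathcal{A})$. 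I would use Lemma~\ref{burn} (Schur multipliers) on the abelian section $\langle a\rangle\times\mathbb{Z}_p$ to pin down which multipliers $X\mapsto X^{(m)}$ are algebraic automorphisms, since these constrain the possible combinatorial automorphisms and are the mechanism that collapses the two would-be distinct basic sets.

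The main obstacle I anticipate is the automorphism computation: verifying that $\aut(\mathcal{A})_e$ really does fuse the two intended basic sets, rather than that some unexpected automorphism group on the $D_8$-side keeps them separate. This requires controlling all of $\aut(\mathcal{A})$ and not merely a convenient subgroup, because schurity is about the \emph{full} automorphism group. Concretely, the hard part is ruling out the existence of a permutation that both preserves every $r(X)$ and separates the critical elements; I expect to handle this by exploiting the rigidity of the dihedral action together with the coprimality of $8$ and $p$, which should prevent any mixing between the two direct factors beyond what the tensor decomposition already accounts for. If a pure tensor product does not yield a nonschurian example, I would instead realize $\mathcal{A}$ as a nontrivial $S$-wreath product over a section $B/A$ with $A=\langle a^2\rangle$ or $A=\mathbb{Z}_p$, since wreath constructions are the classical source of nonschurian $S$-rings and give finer control over which cosets the automorphisms must respect.
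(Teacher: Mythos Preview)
Your proposal contains a fundamental misconception about how nonschurity is detected, and this makes the outlined strategy unworkable. By definition, every $\alpha\in\aut(\mathcal{A})$ satisfies $r(X)^\alpha=r(X)$ for each basic set $X$; applying this with $\alpha\in\aut(\mathcal{A})_e$ to a pair $(e,x)\in r(X)$ shows $x^\alpha\in X$. Hence every basic set is $\aut(\mathcal{A})_e$-invariant, and the orbits of $\aut(\mathcal{A})_e$ always \emph{refine} $\mathcal{S}(\mathcal{A})$, never coarsen it. Nonschurity therefore means that $\aut(\mathcal{A})_e$ has \emph{more} orbits than there are basic sets, i.e.\ some basic set fails to be a single orbit. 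Your plan to ``exhibit a basic set $X$ and show that $\aut(\mathcal{A})_e$ acts transitively on a set strictly larger than $X$'' is aiming at an impossibility, and the talk of ``fusing'' or ``merging'' basic sets under the automorphism group cannot lead anywhere.

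What the paper actually does is the opposite. It writes down an explicit partition of $G$ (for $p\ge 5$; the cases $p\in\{2,3\}$ are handled by citation and your assumption that $p$ is odd already drops $p=2$) whose classes in the cosets $Hc$ and $Hc^{-1}$ split $\{a,a^3,b,a^2b\}$ into two pairs in a twisted way, while in the remaining cosets $Hc^k$ this set stays intact. The nonschurity argument then shows that the point stabilizer $K=\aut(\mathcal{A})_e$ is \emph{too small}: working through the quotient by $A_1=\langle a^2\rangle$ (where $\mathcal{A}_{G/A_1}$ is a tensor product, so its automorphism group is computable) one proves $K_{ac}\le K_{ac^3}$. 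But $ac$ lies in a basic set of size~$2$ and $ac^3$ in one of size~$4$, so if these were genuine $K$-orbits the orbit--stabilizer theorem would give $|K_{ac}|=|K|/2>|K|/4=|K_{ac^3}|$, contradicting the inclusion. Thus the size-$4$ class containing $ac^3$ cannot be a single orbit. To repair your plan you must reverse the direction of the argument: construct $\mathcal{A}$ and then prove that the stabilizer of one point forces fixing another point in a larger basic set.
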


\begin{proof}
The statement of the proposition for $p\in\{2,3\}$ follows from~\cite[Lemma~3.1]{PV}. Further we assume that $p\geq 5$. 

Let $H=\langle a,b:~a^4=b^2=e,a^b=a^{-1} \rangle\cong D_8$, $A_1=\langle a^2 \rangle$, $A_2=\langle ab \rangle$, $C$ a cyclic group of order~$p$, $c$ a generator of~$C$, and $G=H\times C\cong D_8\times \mathbb{Z}_p$. Let us construct a nonschurian $S$-ring over~$G$. Put

$$Z_0=\{e\},~Z_1=\{a^2\},~Z_2=\{ab\},~Z_3=\{a^3b\},~Z_4=\{a,a^3,b,a^2b\},$$
$$X_1=c\{e,a^2\},~X_2=c\{ab,a^3b\},~X_3=c\{a,b\},~X_4=c\{a^3,a^2b\},$$
$$Y_1=c^{-1}\{e,a^2\},~Y_2=c^{-1}\{ab,a^3b\},~Y_3=c^{-1}\{a^3,b\},~Y_4=c^{-1}\{a,a^2b\},$$
$$T_{1k}=c^k\{e,a^2\},~T_{2k}=c^k\{ab,a^3b\},~T_{3k}=c^k\{a,a^3,b,a^2b\},~k\in\{2,\ldots,p-2\}.$$

One can see that the sets $Z_i$, $X_i$, $Y_i$, $T_{jk}$ form the partition of $G$. Denote this partition by $\mathcal{S}$.

\begin{lemm}
The $\mathbb{Z}$-module $\mathcal{A}=\Span_{\mathbb{Z}}\{\underline{X}:~X\in \mathcal{S}\}$ is an $S$-ring over $G$.
\end{lemm}

\begin{proof}
It is easy to check that $Z_i=Z_i^{-1}$ and $Y_i=X_i^{-1}$ for every $i\in\{1,2,3,4\}$ and $T_{j~p-k}=T_{jk}^{-1}$ for every $j\in\{1,2,3\}$ and $k\in\{2,\ldots,p-2\}$. To complete the proof of the lemma, it suffices to check that $\underline{X}\underline{Y}\in \mathcal{A}$ for every $X,Y\in \mathcal{S}$. The definition of $\mathcal{S}$ yields that $X=c^iX_0$ and $Y=c^jY_0$ for some $i,j\in\{0,\ldots,p-1\}$ and $X_0,Y_0\subseteq H$. So $\underline{X}\underline{Y}=c^{i+j}\underline{X_0}\underline{Y_0}$. The straightforward computation in $H\cong D_8$ implies that $\underline{X_0}\underline{Y_0}\in \Span_{\mathbb{Z}}\{\underline{T_0}:~T_0\subseteq H,~c^{i+j}T_0\in \mathcal{S}\}$. Thus, $\underline{X}\underline{Y}\in \mathcal{A}$.
\end{proof}

\begin{lemm}\label{2factor}
In the above notations, the stabilizer $\aut(\mathcal{A}_{G/A_1})_{A_1}$ of $A_1$ in $\aut(\mathcal{A}_{G/A_1})$ is generated by the involution $\sigma\in \sym(G/A_1)$ which interchanges $A_1ac^k$ and $A_1bc^k$ and fixes $A_1c^k$ and $A_1abc^k$ for every $k\in\{0,\ldots,p-1\}$.
\end{lemm}
\begin{proof}
From the definition of $\mathcal{A}$ it follows that $\mathcal{A}_{G/A_1}=\mathcal{A}_{H/A_1}\otimes \mathbb{Z}C$. Therefore $\aut(\mathcal{A}_{G/A_1})=\aut(\mathcal{A}_{H/A_1})\times \aut(\mathbb{Z}C)$. Clearly, $\aut(\mathbb{Z}C)=C_r$. It is easy to see that $\aut(\mathcal{A}_{H/A_1})=(H/A_1)_r\rtimes \langle \sigma^{H/A_1} \rangle\cong \mathbb{Z}_2\wr \mathbb{Z}_2$, where $\sigma^{H/A_1}$ is the restriction of $\sigma$ on $H/A_1$. Thus,
$$\aut(\mathcal{A}_{G/A_1})=((H/A_1)_r\rtimes \langle \sigma^{H/A_1} \rangle)\times C_r.$$
The latter equality implies that $\aut(\mathcal{A}_{G/A_1})_{A_1}=\langle \sigma \rangle$, as required.  
\end{proof}

\begin{lemm}\label{nonschurd8}
The $S$-ring  $\mathcal{A}$ is nonschurian.
\end{lemm}

\begin{proof}
Assume the contrary. Then $\mathcal{S}=\orb(K,G)$, where $K=\aut(\mathcal{A})_e$. Let $\alpha \in K_{ac}$. Observe that $\alpha^{G/A_1}\in \aut(\mathcal{A}_{G/A_1})_{A_1}$. By Lemma~\ref{2factor}, we have $\alpha^{G/A_1}$ is trivial or $\alpha^{G/A_1}=\sigma$. However, the latter case is impossible because $(A_1ac)^{\alpha^{G/A_1}}=A_1ac$. Thus, $\alpha^{G/A_1}$ is trivial.

Since~$\alpha\in K_{ac}$, Eq.~\eqref{aut} implies that $(X_4ac)^\alpha=X_4ac$ and $T_{12}^{\alpha}=T_{12}$. So
$$\{c^2\}^\alpha=(X_4ac\cap T_{12})^\alpha=X_4ac\cap T_{12}=\{c^2\}.$$
By the above equality and Eq.~\eqref{aut}, we have $(X_3c^2)^{\alpha}=X_3c^2$. In addition, $(A_1ac^3)^{\alpha}=A_1ac^3$ because $\alpha^{G/A_1}$ is trivial. So
$$\{ac^3\}^{\alpha}=(X_3ac^3\cap A_1ac^3)^{\alpha}=X_3ac^3\cap A_1ac^3=\{ac^3\}.$$
Thus, $\alpha\in K_{ac^3}$ and hence $K_{ac}\leq K_{ac^3}$.

On the one hand, $|K_{ac}|=|K|/|X_3|=|K|/2$ because $X_3$ is an orbit of~$K$ containing~$ac$. On the other hand, $|K_{ac^3}|=|K|/|T_{33}|=|K|/4$ because $T_{33}$ is an orbit of~$K$ containing~$ac^3$. Therefore $|K_{ac}|=|K|/2>|K|/4=|K_{ac^3}|$, a contradiction to $K_{ac}\leq K_{ac^3}$.   
\end{proof}

Lemma~\ref{nonschurd8} completes the proof of Proposition~\ref{d8cp}.

\end{proof}

\section{Nonschurity of $Q_8\times \mathbb{Z}_p$}

The main result of this section is the following statement. 

\begin{prop}\label{q8cp}
The group $Q_8\times \mathbb{Z}_p$ is not Schur for every prime~$p$ such that $p\equiv1\mod 4$ or $p\equiv1\mod 6$. 
\end{prop}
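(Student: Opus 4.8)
The plan is to mimic the strategy that succeeded for $D_8\times\mathbb{Z}_p$ in Proposition~\ref{d8cp}: explicitly construct a partition $\mathcal{S}$ of $G=Q_8\times\mathbb{Z}_p$, verify it defines an $S$-ring $\mathcal{A}$, and then show $\mathcal{A}$ is nonschurian by deriving a contradiction about point stabilizer sizes in $K=\aut(\mathcal{A})_e$. First I would fix notation, writing $Q_8=\langle a,b:\ a^4=e,\ b^2=a^2,\ a^b=a^{-1}\rangle$ with center $\langle a^2\rangle$ and the three cyclic subgroups $\langle a\rangle$, $\langle b\rangle$, $\langle ab\rangle$ of order~$4$, and letting $C=\langle c\rangle$ be cyclic of order~$p$. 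The role played in the $D_8$ case by the Klein-type quotient $H/A_1$ must here be played by $Q_8/\langle a^2\rangle\cong\mathbb{Z}_2\times\mathbb{Z}_2$; the central involution $a^2$ is an $\mathcal{A}$-subgroup, and I would design the basic sets so that $\mathcal{A}_{G/\langle a^2\rangle}$ again decomposes as a tensor product $\mathcal{A}_{(Q_8/\langle a^2\rangle)}\otimes\mathbb{Z}C$, giving the analogue of Lemma~\ref{2factor}: the stabilizer of $\langle a^2\rangle$ in $\aut(\mathcal{A}_{G/\langle a^2\rangle})$ is generated by a single involution $\sigma$ swapping two of the three nonidentity cosets and fixing the third.

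The essential new ingredient is the arithmetic condition $p\equiv1\bmod 4$ or $p\equiv1\bmod 6$, and I expect this to enter through a \emph{cyclotomic} $S$-ring over~$C$. When $p\equiv1\bmod 4$ the multiplicative group $\mathbb{Z}_p^\times$ has an element of order~$4$, and when $p\equiv1\bmod 6$ it has one of order~$6$ (equivalently order~$3$); in either case there is a proper subgroup $M\le\aut(C)$ whose orbit structure on~$C$ is nontrivial. The idea is to ``glue'' the automorphism structure of $Q_8$ to this cyclotomic structure on~$C$ in a skew fashion, so that the basic sets outside the central section are twisted $M$-orbits attached to distinct cosets of $\langle a^2\rangle$ in a way that no single combinatorial automorphism can reproduce. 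Concretely, I would let the element of order~$4$ (or of order~$3$, exploiting that $Q_8/\langle a^2\rangle$ has three involutions permuted cyclically) act simultaneously on the three order-$4$ cyclic subgroups of $Q_8$ and on the orbits of $M$ on~$C$, defining basic sets of the form $c^k\cdot(\text{coset data})$ whose closure under multiplication forces the tensor decomposition above while making the global symmetry strictly smaller than what schurity would require. Checking that $\mathcal{S}$ is closed under inverses and that $\underline{X}\,\underline{Y}\in\mathcal{A}$ reduces, exactly as before, to computations inside $Q_8$ together with the cyclotomic relation on~$C$, which I would record as a short lemma analogous to the first lemma of Proposition~\ref{d8cp}.

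For the nonschurity argument I would again pick a suitably chosen point, say $ac$ (or an analogous element in one of the noncentral cyclic subgroups times a generator of~$C$), and trace through an automorphism $\alpha\in K_{ac}$. Using that $\alpha^{G/\langle a^2\rangle}$ lies in $\langle\sigma\rangle$ and is forced to be trivial because $\sigma$ moves the relevant coset, I would apply Eq.~\eqref{aut} repeatedly to intersections of the form $(\text{basic set})\cap(\text{coset})$ to pin down the images of successive powers $c^j$ and thereby show $K_{ac}\le K_{w}$ for some point $w$ lying in a \emph{strictly larger} basic set than the one containing $ac$. Comparing $|K_{ac}|=|K|/|X|$ with $|K_w|=|K|/|X'|$ where $|X'|>|X|$ yields $|K_{ac}|>|K_w|$, contradicting the inclusion and proving $\mathcal{A}$ nonschurian. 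The main obstacle will be the design phase: the partition must be engineered so that precisely two mechanisms coexist—a degenerate $\mathbb{Z}_2\wr\mathbb{Z}_2$-type symmetry on the central quotient and a rigid cyclotomic twist on~$C$—and one must verify both that multiplication stays inside $\mathcal{A}$ (Schur-ring axiom) and that the forced stabilizer inclusion genuinely violates the orbit-size bookkeeping. It is here that the hypothesis $p\equiv1\bmod4$ or $p\equiv1\bmod6$ is indispensable, since without an element of order $4$ or $3$ in $\aut(C)$ the cyclotomic structure collapses and the construction degenerates to a schurian (indeed cyclotomic) $S$-ring.
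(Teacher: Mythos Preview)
Your plan has a structural inconsistency at its core, and the nonschurity mechanism you propose is not the one that actually works here.

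You want two things simultaneously: (i) that $\mathcal{A}_{G/\langle a^2\rangle}$ split as $\mathcal{A}_{Q_8/\langle a^2\rangle}\otimes\mathbb{Z}C$, so that the analogue of Lemma~\ref{2factor} holds and the stabilizer in the quotient is a single involution; and (ii) that the basic sets outside the center be built from nontrivial cyclotomic classes $C_1,\dots,C_l$ of $C$ (with $l=4$ or $l=6$), since this is where the hypothesis $p\equiv1\bmod4$ or $p\equiv1\bmod6$ enters. These are incompatible. As soon as a basic set such as $aC_i\cup a^3C_j$ appears, its image in $G/\langle a^2\rangle$ is $\bar a\,(C_i\cup C_j)$, so the quotient $S$-ring over $C$ is a genuine cyclotomic $S$-ring, not $\mathbb{Z}C$. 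The stabilizer $\aut(\mathcal{A}_{G/\langle a^2\rangle})_{\langle a^2\rangle}$ is then far from a two-element group, and the chain ``$\alpha^{G/\langle a^2\rangle}$ is forced trivial $\Rightarrow$ $K_{ac}\le K_w$ with $|X'|>|X|$'' collapses. In the $D_8$ proof the layers $c^kH$ were used one power of $c$ at a time, which is exactly what made the quotient $\mathbb{Z}C$ and the orbit-size bookkeeping possible; that architecture leaves no room for a cyclotomic twist.

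What the paper does instead is keep the cyclotomic classes in the construction (so the quotient over $\langle a^2\rangle$ is \emph{not} a tensor product with $\mathbb{Z}C$) and abandon the orbit-size comparison entirely. Assuming schurity, one picks automorphisms sending, e.g., $ac_2$ to $ac_4$ inside a common basic set, and uses Eq.~\eqref{aut} on intersections like $X_8a\cap X_4$ to force equalities among the structure constants $c_{ij}^k$ of the cyclotomic $S$-ring $\cyc(M,C)$. These equalities, fed into Dickson's explicit evaluations of cyclotomic numbers of order~$4$ (resp.~$6$), contradict either the representation $p=r^2+4s^2$ (resp.\ $4p=r^2+27s^2$) with $s\neq0$, or the parity of $m=(p-1)/l$. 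In short, the contradiction is number-theoretic, not a stabilizer-index count; this is the missing idea in your proposal.
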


Let $p$ be a prime and $l$ a divisor of~$p-1$. Suppose that $C\cong \mathbb{Z}_p$. The group $\aut(C)$ is a cyclic group of order~$p-1$. Since $p-1$ is divisible by~$l$, the group $\aut(C)$ has a unique subgroup~$M$ of index~$l$. Put $m=|M|=\frac{p-1}{l}$. It is easy to see that $|\orb(M,C^\#)|=l$ and $\aut(C)$ acts on the set $\orb(M,C^\#)$ as the regular cyclic group of order~$l$. Let $C_1,\ldots,C_l$ be the nontrivial orbits of $M$ on $C$ such that the cycle $f=(C_1 \cdots C_l)\in \sym(\orb(M,C^\#))$ is a generator of $\aut(C)$ acting on $\orb(M,C^\#)$. Clearly, $|C_i|=|C_j|$ for all $i,j$ and $\orb(M,C)$ is the set of the basic sets of the cyclotomic $S$-ring $\mathcal{C}=\cyc(M,C)$. In case $l=4$, we have $C_i=C_i^{-1}$ for every $i$ if $m$ is even and $C_3=C_1^{-1}$, $C_4=C_2^{-1}$ if $m$ is odd. In case $l=6$, we have $C_i=C_i^{-1}$ for every $i$ if $m$ is even and $C_4=C_1^{-1}$, $C_5=C_2^{-1}$, $C_6=C_3^{-1}$ if $m$ is odd.

Put $c_{ij}^k=c_{C_iC_j}^{C_k}$ and $c_{i^f j^f}^{k^f}=c_{C_i^f C_j^f}^{C_k^f}$. From Lemma~\ref{burn} it follows that $f$ is an algebraic automorphism of $\mathcal{C}$ and hence
\begin{equation}\label{constants1}
c_{i^f j^f}^{k^f}=c_{ij}^{k}
\end{equation}
for all $i,j,k\in\{1,\ldots,l\}$.

\begin{lemm}\label{cycnumb4}
In the above notations, let $l=4$.
\\
\\ 
$(1)$ If $m$ is even, then $p=r^2+4s^2$, where $s=c_{12}^{1}-c_{14}^{1}$ and $r$ is an integer. 
\\
\\
$(2)$ If $m$ is odd, then $m=c_{32}^1+c_{34}^1+2c_{41}^1$.
\end{lemm}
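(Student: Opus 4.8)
My plan is to pass from the $S$-ring to its eigenvalues. I would fix a nontrivial linear character $\psi$ of $C$, extend it to a ring homomorphism $\psi\colon\mathbb{Z}C\to\mathbb{C}$, and put $\eta_i=\psi(\underline{C_i})$ (a Gaussian period). Applying $\psi$ to $\underline{C_i}\,\underline{C_j}=c_{ij}^{\{e\}}\underline{\{e\}}+\sum_{k=1}^{4}c_{ij}^{k}\underline{C_k}$ turns the basic relation into
\[
\eta_i\eta_j=c_{ij}^{\{e\}}+\sum_{k=1}^{4}c_{ij}^{k}\eta_k ,
\]
where $c_{ij}^{\{e\}}=m$ when $C_j=C_i^{-1}$ and $0$ otherwise. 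Besides this I would use only three structural symmetries: the shift \eqref{constants1}, which here reads $c_{ij}^{k}=c_{i+1\,j+1}^{k+1}$ (indices mod $4$, since $f$ acts as $C_i\mapsto C_{i+1}$); the triangle identity \eqref{triangle}, which for equal-sized basic sets becomes $c_{ij}^{\overline{k}}=c_{jk}^{\overline{i}}=c_{ki}^{\overline{j}}$, where $C_{\overline{t}}=C_t^{-1}$; and commutativity $c_{ij}^{k}=c_{ji}^{k}$. The only analytic inputs are $\sum_i\eta_i=-1$ and the two classical Gauss-sum magnitudes recalled below.

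For part $(1)$ I assume $m$ even, so every $C_i$ is self-inverse ($\overline{t}=t$); then \eqref{triangle} and commutativity make $c_{ij}^{k}$ totally symmetric in $(i,j,k)$, and \eqref{constants1} makes it shift-invariant. Writing $\zeta=\sqrt{-1}$, I would form the quartic Gauss sum $\theta=\sum_{i=1}^{4}\zeta^{\,i-1}\eta_i$ and expand $\theta^2$ with the eigenvalue relation. The identity contribution is $m\sum_i\zeta^{2(i-1)}=0$, and \eqref{constants1} forces the coefficient $S_k=\sum_{i,j}\zeta^{(i-1)+(j-1)}c_{ij}^{k}$ of $\eta_k$ to satisfy $S_{k+1}=\zeta^{2}S_k=-S_k$; hence $\theta^2$ collapses to $S\,(\eta_1-\eta_2+\eta_3-\eta_4)$ with $S:=S_1\in\mathbb{Z}[\zeta]$. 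The alternating sum $\eta_1-\eta_2+\eta_3-\eta_4$ is, up to sign, the quadratic Gauss sum, so its square is $p$; and $|\theta|^2=p$ because $\theta$ is a quartic Gauss sum. Comparing moduli gives $|S|^2=p$. Sorting the $c_{ij}^{1}$ by $(i+j)\bmod 4$ and using commutativity, I get $\operatorname{Im}S=2(c_{12}^{1}-c_{14}^{1}-c_{23}^{1}+c_{34}^{1})$; the shift together with the total symmetry gives $c_{23}^{1}=c_{34}^{1}$, so $\operatorname{Im}S=2s$ with $s=c_{12}^{1}-c_{14}^{1}$. Writing $r=\operatorname{Re}S\in\mathbb{Z}$ then yields $p=|S|^2=r^2+4s^2$.

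For part $(2)$, $m$ is odd, so $C_3=C_1^{-1}$ and $C_4=C_2^{-1}$, i.e.\ $\overline{t}=t+2$. Here I would argue combinatorially rather than through periods. Since $C_2\neq C_3^{-1}=C_1$, the product $\underline{C_3}\,\underline{C_2}$ has no identity term, so applying the augmentation map gives $m^2=m\sum_{k=1}^{4}c_{32}^{k}$, that is $m=c_{32}^{1}+c_{32}^{2}+c_{32}^{3}+c_{32}^{4}$. It then remains to identify the three ``off'' constants: chaining \eqref{triangle}, \eqref{constants1} and commutativity, I would show $c_{32}^{2}=c_{32}^{3}=c_{41}^{1}$ and $c_{32}^{4}=c_{34}^{1}$, which gives $m=c_{32}^{1}+c_{34}^{1}+2c_{41}^{1}$.

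The computations are routine once the setup is in place; the main obstacles are two. In part $(1)$ the decisive structural point is that \eqref{constants1} is exactly what forces $\theta^2$ to be a scalar multiple of the quadratic Gauss sum, and the delicate arithmetic point is that $\operatorname{Im}S$ is even and equals $2(c_{12}^{1}-c_{14}^{1})$, which rests on the identity $c_{23}^{1}=c_{34}^{1}$. In both parts the remaining difficulty is bookkeeping: correctly tracking the inverse-index map $\overline{t}$ and the simultaneous shift through \eqref{triangle} and \eqref{constants1} without sign or index errors. The only genuinely external ingredients are the two classical Gauss-sum magnitudes; everything else is forced by the $S$-ring axioms and the algebraic automorphism $f$.
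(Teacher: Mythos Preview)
Your argument is correct in both parts. For part~(1) the chain $\theta^2=S_1(\eta_1-\eta_2+\eta_3-\eta_4)$, $|\theta|^2=p$, $(\eta_1-\eta_2+\eta_3-\eta_4)^2=p$ (using $p\equiv 1\bmod 4$) indeed gives $|S_1|^2=p$, and the reduction $c_{23}^{1}=c_{12}^{3}=c_{34}^{1}$ via total symmetry plus the shift is exactly right, yielding $\operatorname{Im}S_1=2(c_{12}^{1}-c_{14}^{1})$. For part~(2), setting $d_{ijk}=c_{ij}^{k+2}$ makes $d$ symmetric (from \eqref{triangle} and commutativity) and shift-invariant (from \eqref{constants1}); then $c_{32}^{2}=d_{234}$, $c_{32}^{3}=d_{123}$, $c_{41}^{1}=d_{134}$ lie in one orbit, and $c_{32}^{4}=d_{223}$, $c_{34}^{1}=d_{334}$ in another, so your three identifications hold and the sum collapses as claimed.

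This is, however, a genuinely different route from the paper. The paper's proof is a one-line reduction: it observes that $c_{ij}^{1}=(k-1,j-1)_4$ with $C_k=C_i^{-1}$, i.e.\ the structure constants are literally the classical cyclotomic numbers of order~$4$, and then quotes the relevant identities from Dickson's 1935 tables (Eqs.~(50)--(51) for~(1) and Eq.~(54) for~(2)). What you have done is essentially rederive those particular Dickson identities from first principles: part~(1) via the Gauss--sum/Gaussian--period machinery, part~(2) via the augmentation count and the orbit structure of the constants under the symmetries \eqref{triangle}, \eqref{constants1}. Your approach is self-contained modulo the two Gauss-sum magnitudes, and it makes transparent \emph{why} the combination $c_{12}^{1}-c_{14}^{1}$ appears (it is half the imaginary part of a Gaussian integer of norm $p$). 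The paper's approach is shorter and offloads all the arithmetic to a standard reference, at the cost of sending the reader to locate specific numbered equations in Dickson.
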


\begin{proof}
Observe that $c_{ij}^1=(k-1,j-1)_4$ (see, e.g.~\cite[Eq.~(9)]{BPR}) for all $i,j\in\{1,2,3,4\}$, where $k\in\{1,2,3,4\}$ is such that $C_k=C_i^{-1}$ and $(k-1,j-1)_4$ is a \emph{cyclotomic number} of order~$4$ (see~\cite{Di} for the definition). Therefore Statement~$(1)$ of the lemma follows from~\cite[p.~400,~Eqs.~(50)-(51)]{Di}, whereas Statement~$(2)$ follows from~\cite[p.~401,~Eq.~(54)]{Di}.
\end{proof}

\begin{lemm}\label{cycnumb6}
In the above notations, let $l=6$. 
\\
\\
$(1)$ If $m$ is even, then $4p=r^2+27s^2$, where $s=c_{12}^1+2c_{24}^1+c_{15}^1-c_{13}^1-2c_{25}^1-c_{16}^1$ and $r$ is an integer. Moreover, $3s=t=2u$ or $3s=-t-2u$, where $t=c_{12}^1+2c_{24}^1-3c_{15}^1+3c_{13}^1-c_{16}^1-2c_{25}^1$ and $u=c_{12}^1-c_{16}^1-c_{24}^1+c_{25}^1$.  
\\
\\
$(2)$ If $m$ is odd, then $m=c_{43}^1+c_{45}^1+c_{51}^1+c_{52}^1+2c_{56}^1$. Moreover, $3s=-t=2u$ or $3s=t-2u$, where $s=c_{42}^1+2c_{51}^1+c_{45}^1-c_{43}^1-2c_{52}^1-c_{46}^1$, $t=c_{45}^1+2c_{51}^1-3c_{42}^1+3c_{46}^1-c_{43}^1-2c_{52}^1$, and $u=c_{51}^1-c_{52}^1-c_{43}^1+c_{45}^1$.
\end{lemm}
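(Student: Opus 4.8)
The plan is to follow exactly the pattern already established in Lemma~\ref{cycnumb4}: reduce every structure constant $c_{ij}^1$ of the cyclotomic $S$-ring $\mathcal{C}=\cyc(M,C)$ to a classical cyclotomic number of order~$6$, and then quote the known formulas relating such numbers to the representation $4p=L^2+27M^2$ of the prime $p\equiv1\bmod 6$. First I would record the identity $c_{ij}^1=(k-1,j-1)_6$, where $k$ is defined by $C_k=C_i^{-1}$, which is the order-$6$ analogue of the relation cited from \cite[Eq.~(9)]{BPR} in the previous lemma; this is where the distinction between $m$ even and $m$ odd enters, since it determines which $C_k$ equals $C_i^{-1}$ (namely $C_i=C_i^{-1}$ for all $i$ when $m$ is even, and $C_4=C_1^{-1},C_5=C_2^{-1},C_6=C_3^{-1}$ when $m$ is odd). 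With this dictionary in hand, each displayed combination such as $s=c_{12}^1+2c_{24}^1+c_{15}^1-c_{13}^1-2c_{25}^1-c_{16}^1$ becomes an explicit integer combination of cyclotomic numbers $(i,j)_6$.

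Next I would invoke Dickson's classical determination of the cyclotomic numbers of order~$6$ (the order-$6$ counterparts of the order-$4$ results quoted from \cite[pp.~400--401]{Di}). These express the $(i,j)_6$ in terms of the parameters $L$ and $M$ in the representation $4p=L^2+27M^2$, together with the auxiliary relations among them. Statement~$(1)$ should follow by substituting Dickson's expressions into the defining combination for $s$ and checking that $s$ equals (up to the normalization in $4p=r^2+27s^2$) the parameter $M$; the two linear relations $3s=t=2u$ or $3s=-t-2u$ should then emerge as the two possible sign choices in Dickson's formulas, corresponding to the ambiguity in the sign of $L$ (equivalently, to which generator $f$ of $\aut(C)$ was fixed). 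Statement~$(2)$ is handled identically but with the index shift dictated by the $m$-odd inversion rule $C_k=C_{k+3}^{-1}$; the identity $m=c_{43}^1+c_{45}^1+c_{51}^1+c_{52}^1+2c_{56}^1$ is the order-$6$ analogue of Statement~$(2)$ of Lemma~\ref{cycnumb4} and should follow from the row-sum relation $\sum_j(i,j)_6=m$ or $m-1$ (depending on $i$) that the cyclotomic numbers satisfy, combined with the relation \eqref{triangle} and the algebraic automorphism property \eqref{constants1}.

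The main obstacle I expect is purely bookkeeping rather than conceptual: the cyclotomic numbers of order~$6$ split into two cases ($2$ is or is not a cube mod $p$, equivalently $m$ even or odd) and within each case Dickson's formulas involve several parameters with sign ambiguities, so correctly matching my index conventions $C_1,\ldots,C_6$ (ordered by the cycle $f$) to Dickson's labelling, and tracking the inverse map $C_i\mapsto C_i^{-1}$, is where errors are easiest to make. In particular, verifying that the \emph{particular} linear combination defining $s$ collapses to a single multiple of $M$, and that $t$ and $u$ satisfy the stated linear relations, requires carrying Dickson's full tables through the substitution; I would organize this by first writing out all $36$ values $c_{ij}^1$ as functions of the Dickson parameters in each parity case, then computing $s,t,u$ directly. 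The symmetry relation \eqref{constants1} can be used as an independent consistency check: since $f$ cyclically permutes the indices and preserves the structure constants, the combinations $s,t,u$ must transform predictably, which pins down the sign ambiguities without re-deriving Dickson's formulas from scratch.
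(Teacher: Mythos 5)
Your proposal follows essentially the same route as the paper: translate each structure constant via $c_{ij}^1=(k-1,j-1)_6$ with $C_k=C_i^{-1}$ (splitting into the $m$ even/odd cases according to which $C_k$ is $C_i^{-1}$), then quote Dickson's order-$6$ cyclotomic formulas --- the paper cites \cite[p.~409, Eqs.~(89)--(90)]{Di} together with the relations among Dickson's parameters $M$, $B$, $F$ for Statement~(1), and \cite[p.~410, Eqs.~(95)--(96)]{Di} with the subsequent remark for Statement~(2). The only cosmetic difference is that for the identity $m=c_{43}^1+c_{45}^1+c_{51}^1+c_{52}^1+2c_{56}^1$ the paper reads it off directly from Dickson's Eq.~(95)--(96) rather than assembling it from row sums, but this is the same computation.
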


\begin{proof}
As in the previous lemma, we use the connection between cyclotomic numbers and some intersection numbers that provided by the formula $c_{ij}^1=(k-1,j-1)_4$ (see, e.g.~\cite[Eq.~(9)]{BPR}) for all $i,j\in\{1,2,3,4,5,6\}$, where $k\in\{1,2,3,4,5,6\}$ is such that $C_k=C_i^{-1}$. Now Statement~$(1)$ of the lemma follows from~~\cite[p.~409,~Eqs.~(89)-(90)]{Di} and the relations between the parameters denoted in~\cite[p.~409]{Di} by $M$, $B$ and $F$, whereas Statement~$(2)$ follows from~\cite[p.~410,~Eqs.~(95)-(96)]{Di} and the remark after these equalities.
\end{proof}

Let $H=\langle a,b:~a^4=e,a^2=b^2,a^b=a^{-1} \rangle \cong Q_8$ and $G=H\times C\cong Q_8\times \mathbb{Z}_p$. If $l=4$, then put
$$X_0=\{e\},~X_1=\{a^2\},~X_2=\{ab,a^3b\},~X_3=\{a,a^3,b,a^2b\},$$
$$X_4=C^{\#},~X_5=a^2C^{\#}$$
$$X_6=\{a,a^3\}(C_2\cup C_4)\cup \{b,a^2b\}(C_1\cup C_3),$$
$$X_7=aC_1\cup a^3C_3\cup bC_2\cup a^2bC_4,$$
$$X_8=X_7a^2=aC_3\cup a^3C_1\cup  bC_4\cup a^2bC_2,$$
$$X_9=\{ab,a^3b\}C^{\#}.$$
If $l=6$, then put
$$Y_0=\{e\},~Y_1=\{a^2\},~Y_2=\{a,a^3,b,a^2b,ab,a^3b\},$$
$$Y_3=C^{\#},~Y_4=a^2C^{\#}$$
$$Y_5=\{a,a^3\}(C_1\cup C_4)\cup \{b,a^2b\}(C_2\cup C_5)\cup \{ab,a^3b\}(C_3\cup C_6),$$
$$Y_6=\{a,a^3\}(C_2\cup C_5)\cup \{b,a^2b\}(C_3\cup C_6)\cup \{ab,a^3b\}(C_1\cup C_4),$$
$$Y_7=aC_3\cup a^2bC_4 \cup abC_5 \cup a^3C_6 \cup bC_1 \cup a^3bC_2,$$
$$Y_8=Y_7a^2=aC_6\cup a^2bC_1 \cup abC_2 \cup a^3C_3 \cup bC_4 \cup a^3bC_5.$$

The sets $X_i$, $i\in I=\{0,\ldots,9\}$, and $Y_j$, $j\in J=\{0,\ldots,8\}$, form the partitions of $G$. Denote these partition by $\mathcal{S}$ and $\mathcal{T}$, respectively. Put $\mathcal{A}=\Span_{\mathbb{Z}}\{\underline{X}:~X\in \mathcal{S}\}$ and $\mathcal{B}=\Span_{\mathbb{Z}}\{\underline{Y}:~Y\in \mathcal{T}\}$.

\begin{lemm}
The $\mathbb{Z}$-modules $\mathcal{A}$ and $\mathcal{B}$ are $S$-rings over $G$.
\end{lemm}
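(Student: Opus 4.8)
The plan is to verify the two $S$-ring axioms that are not already manifest from the construction, namely the closure of $\mathcal{S}$ (resp. $\mathcal{T}$) under taking inverses and the closure of the corresponding $\mathbb{Z}$-module under multiplication. The partition condition and the presence of $\{e\}$ as a block are immediate from the definitions of the $X_i$ and $Y_j$. First I would record the inverse-closure: for $\mathcal{A}$ one checks that $X_0,X_1,X_2,X_3$ are symmetric subsets of $H$ (being conjugacy-closed subsets of $Q_8$), that $X_9=X_9^{-1}$, and that the pairing among $X_4,X_5,X_6,X_7,X_8$ depends on the parity of $m$ exactly as recorded in the preamble for the orbits $C_i$; when $m$ is even every $C_i$ is symmetric so each $X_i$ is symmetric, while when $m$ is odd the relations $C_3=C_1^{-1}$, $C_4=C_2^{-1}$ (for $l=4$) force $X_7^{-1}=X_8$ or $X_7^{-1}=X_7$ after the appropriate re-indexing. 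The analogous bookkeeping settles $\mathcal{T}$ using the $l=6$ pairing of the $C_i$.

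The substantive step is closure under multiplication, i.e. that $\underline{X}\,\underline{Y}\in\mathcal{A}$ for all $X,Y\in\mathcal{S}$ (and likewise for $\mathcal{B}$). Here I would exploit the product structure $G=H\times C$ together with the cyclotomic $S$-ring $\mathcal{C}=\cyc(M,C)$, whose basic sets are precisely $\{e\},C_1,\dots,C_l$. Each basic set of $\mathcal{A}$ is a union of sets of the form $D\,C_i$ with $D\subseteq H$, so a product $\underline{X}\,\underline{Y}$ expands into a $\mathbb{Z}$-combination of terms $\underline{D_1}\,\underline{D_2}\otimes\underline{C_i}\,\underline{C_j}$. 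The $H$-factor is controlled by the multiplication table of the (schurian, indeed cyclotomic) $S$-ring $\mathcal{A}_H$ on $Q_8$, and the $C$-factor is controlled by the structure constants $c^k_{ij}$ of $\mathcal{C}$. The key point making the products land back in $\mathcal{A}$ rather than in a finer partition is Eq.~\eqref{constants1}: the algebraic automorphism $f$ guarantees that the structure constants are invariant under the cyclic shift $C_i\mapsto C_{i^f}$, which is exactly what is needed for the ``twisted'' blocks $X_6,X_7,X_8$ (whose definition couples the $H$-coordinate to the index $i$ of $C_i$) to be preserved under multiplication.

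Concretely I would organize the verification by the image of each block under the canonical projection $\pi:G\to H$ and under the projection to $C$, reducing to finitely many cases: products of two ``untwisted'' blocks (those of the form $D\,C^{\#}$ or $D\,C_i$ with $D$ an $\mathcal{A}_H$-set) reduce directly to the $Q_8$ and $\mathbb{Z}C$ tables, while products involving $X_6,X_7,X_8$ require tracking how multiplication by $a$, $b$, $ab$ permutes the cyclotomic classes and invoking~\eqref{constants1} to collapse the result into the prescribed blocks. The main obstacle is precisely this last family of cases: one must check that the coefficients produced by expanding, say, $\underline{X_7}\,\underline{X_7}$ or $\underline{X_6}\,\underline{X_7}$ are constant across each target block, and this constancy is not formal but rests on the cyclotomic identity~\eqref{constants1} forcing $c^{k}_{ij}=c^{k^f}_{i^f j^f}$. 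I would therefore present the untwisted cases briefly and devote the bulk of the argument to showing, via~\eqref{constants1}, that the twisted products respect the partition; the computation for $\mathcal{B}$ is entirely parallel, with $l=6$ and the six-class shift in place of the four-class one.
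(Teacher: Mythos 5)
Your proposal is correct and reduces the essential difficulty to the same place as the paper, but it organizes the routine part of the verification differently. The paper first exhibits explicit subdirect products $K_1\leq \aut(H)\times\aut(C)$ and $K_2\leq \aut(H)\times\aut(C)$ whose cyclotomic $S$-rings $\cyc(K_1,G)$ and $\cyc(K_2,G)$ have exactly the basic sets $X_i$, $i\notin\{7,8\}$, together with $X_7\cup X_8$ (respectively $Y_j$, $j\notin\{7,8\}$, together with $Y_7\cup Y_8$); since these cyclotomic rings are contained in $\mathcal{A}$ and $\mathcal{B}$ as $\mathbb{Z}$-modules, every product $\underline{X_i}\,\underline{X_j}$ with $i,j\notin\{7,8\}$ lies in $\mathcal{A}$ with no computation at all, and only the products involving $X_7$ or $X_8$ (resp. $Y_7$ or $Y_8$) require the direct expansion through the structure constants of $\cyc(M,C)$ and the shift-invariance~\eqref{constants1}. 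You instead propose a uniform direct computation for all products via the decomposition $G=H\times C$, which is sound (and your inverse-closure bookkeeping, including the parity-of-$m$ case split for $X_7,X_8$, matches the paper's), but it forces you to verify coefficient constancy by hand even for products such as those involving $X_6$, which the paper's cyclotomic shortcut disposes of for free. You correctly identify that the only non-formal point is the invariance~\eqref{constants1} of the cyclotomic structure constants under the shift $f$, which is precisely the ingredient the paper invokes for the remaining cases; a minor caution is that for the twisted blocks the $H$-components (such as $\{a,a^3\}$ or the singletons in $X_7$) are not $\mathcal{A}_H$-sets, so their products are governed by the group ring $\mathbb{Z}Q_8$ rather than by the multiplication table of $\mathcal{A}_H$ as you phrase it at one point — but since you do track the individual cosets in the twisted cases, this does not create a gap.
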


\begin{proof}
One can see that $X_i=X_i^{-1}$ for $i\in I\setminus \{7,8\}$ and $Y_j=Y_j^{-1}$ for $j\in J\setminus \{7,8\}$. If $m$ is even, then $X_8=X_7^{-1}$ and $Y_8=Y_7^{-1}$; for otherwise $X_7=X_7^{-1}$, $X_8=X_8^{-1}$, $Y_7=Y_7^{-1}$, and $Y_8=Y_8^{-1}$.

Let $\sigma_1,\sigma_2,\sigma_3\in \aut(H)$ such that $\sigma_1:(a,b)\mapsto (b,a^3)$, $\sigma_2:(a,b)\mapsto (a^3,b)$, and $\sigma_3:(a,b)\mapsto (a^2b,a^3b)$.  Put 
$$U=\langle \sigma_1,\sigma_2 \rangle,~V=\langle \sigma_1^2,\sigma_2,\sigma_3 \rangle,~\text{and}~U_0=V_0=\langle \sigma_1^2, \sigma_2 \rangle.$$
One can verify straightforwardly that $U\cong D_8$, $V\cong A_4$, and $U_0=V_0\cong \mathbb{Z}_2\times \mathbb{Z}_2$. Moreover, $U_0=V_0$ is normal in the both $U$ and $V$. Put also $W=\aut(C)$. Denote by~$W_0$ the unique subgroup of~$W$ such that $W_0>M$ and $|W_0:M|=2$ (such group $W_0$ exists because $l$ is even). The canonical epimorphisms from $U$ to $U/U_0$, from $V$ to $V/V_0$, and from $W$ to $W/W_0$ are denoted by $\pi_1$, $\pi_2$, and $\pi_3$, respectively.  

If $l=4$, then there exists the unique isomorphism $\psi$ from $U/U_0\cong \mathbb{Z}_2$ to $W/W_0\cong \mathbb{Z}_2$. Put
$$K_1=\{(\sigma,\tau)\in U\times W:~(\sigma^{\pi_1})^\psi=\tau^{\pi_3}\}\leq \aut(G).$$
If $l=6$, then there exists the unique isomorphism $\theta$ from $V/V_0\cong \mathbb{Z}_3$ to $W/W_0\cong \mathbb{Z}_3$ such that $(V_0\sigma_3)^\theta=W_0^{f^{\prime}}$, where $f^{\prime}\in W$ acting on $\orb(M,C^\#)$ induces $f$. Put 
$$K_2=\{(\sigma,\tau)\in V\times W:~(\sigma^{\pi_2})^\theta=\tau^{\pi_3}\}\leq \aut(G).$$
The groups $K_1$ and $K_2$ are subdirect products of $U$ and $W$ and $V$ and $W$, respectively. The straightforward check shows that the basic sets of the cyclotomic $S$-ring $\cyc(K_1,G)$ are $X_i$, $i\in I\setminus \{7,8\}$, and $X_7\cup X_8$, whereas the basic sets of the cyclotomic $S$-ring $\cyc(K_2,G)$ are $Y_j$, $j\in J\setminus \{7,8\}$, and $Y_7\cup Y_8$. This implies that $\underline{X_iX_j}\in \mathcal{A}$ for every $i,j\in I\setminus \{7,8\}$ and $\underline{Y_iY_j}\in \mathcal{B}$ for every $i,j\in J\setminus \{7,8\}$. Thus, to prove that $\mathcal{A}$ and $\mathcal{B}$ are $S$-rings, it remains to verify that $\underline{X_iX_j}\in \mathcal{A}$ and $\underline{Y_iY_j}\in \mathcal{B}$, where at least one of $i$, $j$ belongs to the set $\{7,8\}$. The latter can be done by the straightforward computation using Eq.~\eqref{constants1}.
\end{proof}

\begin{lemm}\label{nonschurq81}
The $S$-ring  $\mathcal{A}$ is not schurian.
\end{lemm}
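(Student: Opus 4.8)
The plan is to mimic the successful strategy used in Lemma~\ref{nonschurd8} for the $D_8\times\mathbb{Z}_p$ case, exploiting a suitable $\mathcal{A}$-section to constrain the point stabilizers of $K=\aut(\mathcal{A})_e$ and then derive a numerical contradiction from the orbit-size formula $|K_x|=|K|/|X|$ where $X$ is the basic set containing $x$. First I would assume for contradiction that $\mathcal{A}$ is schurian, so that $\mathcal{S}=\orb(K,G)$ with $K=\aut(\mathcal{A})_e$. The key structural observation is that the basic sets $X_7$ and $X_8$ are genuinely split: although the cyclotomic $S$-ring $\cyc(K_1,G)$ fuses them into a single basic set $X_7\cup X_8$, the $S$-ring $\mathcal{A}$ keeps them separate. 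This is precisely the feature that should obstruct schurity, since any automorphism of $\mathcal{A}$ fixing $e$ must respect the finer partition, and I expect there is no permutation group large enough to realize $\{X_7,X_8\}$ as two distinct orbits while remaining consistent with the structure constants.

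The key steps, in order, are as follows. I would first identify a convenient $\mathcal{A}$-subgroup—the natural candidate is $\langle a^2\rangle$ or $H$ itself, or perhaps the $\mathcal{A}$-section $G/\langle a^2\rangle$—and analyze $\aut(\mathcal{A}_{G/\langle a^2\rangle})$ together with its stabilizer of the trivial coset, exactly as Lemma~\ref{2factor} did in the dihedral case. Since $\mathcal{A}_{G/\langle a^2\rangle}$ should again decompose as a tensor product of an $S$-ring over $H/\langle a^2\rangle$ with $\mathcal{C}=\cyc(M,C)$, its automorphism group is a direct product, and the relevant stabilizer is a small group that I can describe by generators. Next, for a well-chosen point $x$ (I expect one of the form $ac_1$ with $c_1\in C_1$, sitting in $X_7$), I would take $\alpha\in K_x$ and track how the induced automorphism on the quotient forces $\alpha$ to fix additional points, by intersecting translated basic sets as in $\{c^2\}^\alpha=(X_4ac\cap T_{12})^\alpha$. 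The aim is to show $K_x\leq K_y$ for a cleverly chosen second point $y$ lying in a basic set of strictly larger cardinality than the one containing $x$, yielding $|K_x|>|K_y|$, which contradicts $K_x\leq K_y$.

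The main obstacle I anticipate is twofold. First, unlike the $D_8$ case where the splitting was visible already at the level of a single quotient $G/A_1$, here the asymmetry between $X_7$ and $X_8$ is entangled with the cyclotomic structure of $C$ and with the parity of $m$; I expect to need the algebraic-automorphism relation~\eqref{constants1} and possibly the cyclotomic-number identities of Lemma~\ref{cycnumb4} to pin down which intersections of translated basic sets are singletons. Indeed, the role of the condition $p\equiv1\bmod 4$ (equivalently $l=4$) is almost certainly to guarantee the existence of the nontrivial orbit decomposition $C=C_1\cup C_2\cup C_3\cup C_4$ needed to separate $X_7$ from $X_8$ in the first place, and the contradiction should surface only because the hypothetical automorphism cannot simultaneously preserve the orbit $C_1$ of $M$ and the coarser orbit structure that $K$ would impose. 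Second, bookkeeping the images of group elements under $\alpha$ across the product group $H\times C$ is more delicate than in the dihedral setting because the basic sets $X_6,X_7,X_8$ mix the $\{a,a^3\}$, $\{b,a^2b\}$ cosets with several cyclotomic classes at once; I would therefore work coset-by-coset in $H$ and factor through the projection to $G/\langle a^2\rangle$ to reduce the casework. Once an $\alpha$-fixed point in a larger basic set is exhibited, the cardinality comparison closes the argument exactly as in Lemma~\ref{nonschurd8}.
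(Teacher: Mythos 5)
Your plan transplants the endgame of Lemma~\ref{nonschurd8} (exhibit $K_x\leq K_y$ with $y$ in a strictly larger basic set and contradict $|K_x|=|K|/|X|$), but that mechanism cannot get started here, and the paper does not use it. The singleton-chasing in the dihedral case worked because intersecting a translated basic set with a coset of the $\mathcal{A}$-subgroup $A_1$ isolated individual points such as $\{c^2\}$ and $\{ac^3\}$. Over $Q_8\times\mathbb{Z}_p$ this is impossible outside of $H$: the subgroup $\{1\}\times M\leq\aut(G)$ (automorphisms acting trivially on $H$ and as the index-$4$ subgroup $M$ on $C$) preserves every basic set of $\mathcal{A}$, since $X_6,X_7,X_8$ are unions of sets $hC_i$ and each $C_i$ is an $M$-orbit. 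Hence $M$ embeds in $K=\aut(\mathcal{A})_e$, fixes $H$ pointwise, and is transitive on each $hC_i$; consequently no sequence of intersections of translated basic sets can pin down a single point of $G\setminus H$, only the sets $hC_i$ of size $m=(p-1)/4$, and no containment $K_x\leq K_y$ between point stabilizers of the form you want is available. Your reading of Lemma~\ref{cycnumb4} as a tool ``to pin down which intersections are singletons'' also misconstrues its role.

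What the paper actually does is choose $c_2\in C_2$, $c_4\in C_4$, observe that $ac_2$ and $ac_4$ lie in the \emph{same} orbit $X_6$, and take $\alpha\in K$ with $(ac_2)^\alpha=ac_4$ (an automorphism \emph{moving} a point inside an orbit, not stabilizing one). Using Eq.~\eqref{aut} and the block $Ca$, it deduces $(C_1c_2)^\alpha=C_1c_4$ while $C_1^\alpha=C_1$ and $C_3^\alpha=C_3$; here the separation of $X_7$ from $X_8$ is what makes $X_8ac\cap X_4=C_1c$ computable. Counting then gives $|C_1c_2\cap C_1|=|C_1c_4\cap C_1|$ (resp.\ with $C_3$ when $m$ is odd), which via Eq.~\eqref{triangle} is an equality of quartic cyclotomic numbers, $c_{12}^1=c_{14}^1$ (resp.\ $c_{34}^1=c_{32}^1$). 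Lemma~\ref{cycnumb4} (Dickson's relations, which is exactly where $l=4$, i.e.\ $p\equiv1\bmod 4$, enters) then forces $p=r^2$ or $m$ even, and the contradiction is number-theoretic rather than an orbit count. Your proposal correctly assembles several ingredients (schurity assumption, Eq.~\eqref{aut}, blocks, the relevance of splitting $X_7\cup X_8$), but it is missing the actual closing argument, and the one it substitutes would fail.
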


\begin{proof}
Assume the contrary. Then $\mathcal{S}=\orb(K,G)$, where $K=\aut(\mathcal{A})_e$. Let $c_2\in C_2$ and $c_4\in C_4$. The elements $ac_2$ and $ac_4$ belong to $X_6$. Since $X_6$ is an orbit of~$K$, there exists  $\alpha\in K$ such that $(ac_2)^{\alpha}=ac_4$. Due to Eq.~\eqref{aut}, we have 
\begin{equation}\label{c1c}
(C_1c_2)^{\alpha}=(X_8ac_2\cap X_4)^{\alpha}=X_8ac_4\cap X_4=C_1c_4.
\end{equation}

The set $Ca$ is a block of $K$ because $C$ is an $\mathcal{A}$-subgroup. Observe that $(ac_2)^{\alpha}=ac_4\in(Ca)^\alpha\cap Ca$. So $(Ca)^\alpha=Ca$.  Therefore
$$\{a\}^{\alpha}=(Ca \cap X_3)^{\alpha}=Ca\cap X_3=\{a\}.$$
The above equality and Eq.~\eqref{aut} yield that
\begin{equation}\label{c1}
C_1^{\alpha}=(X_8a\cap X_4)^{\alpha}=X_8a\cap X_4=C_1
\end{equation}
and
\begin{equation}\label{c3}
C_3^{\alpha}=(X_7a\cap X_4)^{\alpha}=X_7a\cap X_4=C_3.
\end{equation}

Suppose that $m$ is even. Then $c_{11}^2=|C_1c_2\cap C_1|$ and $c_{11}^4=|C_1c_4\cap C_1|$. Eqs.~\eqref{c1c} and~\eqref{c1} imply that 
$$c_{11}^2=|C_1c_2\cap C_1|=|(C_1c_2\cap C_1)^{\alpha}|=|C_1c_4\cap C_1|=c_{11}^4.$$
By the latter equality and Eq.~\eqref{triangle}, we have $c_{12}^{1}=c_{14}^{1}$. Therefore $p=r^2$ for some integer $r$ by Lemma~\ref{cycnumb4}(1), a contradiction to primality of~$p$. 

Now suppose that $m$ is odd. Then $c_{33}^2=|C_1c_2\cap C_3|$ and $c_{33}^4=|C_1c_4\cap C_3|$. From Eqs.~\eqref{c1c} and~\eqref{c3} it follows that 
$$c_{33}^2=|C_1c_2\cap C_3|=|(C_1c_2\cap C_3)^{\alpha}|=|C_1c_4\cap C_3|=c_{33}^4.$$
Due to the latter equality and Eq.~\eqref{triangle}, we have $c_{34}^{1}=c_{32}^{1}$. Therefore $m=2c_{32}^1+2c_{41}^1$ by Lemma~\ref{cycnumb4}(2), a contradiction to the oddity of~$m$.
\end{proof}

\begin{lemm}\label{nonschurq82}
The $S$-ring  $\mathcal{B}$ is not schurian.
\end{lemm}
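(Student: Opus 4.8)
The plan is to mirror the structure of the proof of Lemma~\ref{nonschurq81}, since $\mathcal{B}$ is the $l=6$ analogue of $\mathcal{A}$ and the same cyclotomic machinery is available. I would assume for contradiction that $\mathcal{B}$ is schurian, so that $\mathcal{T}=\orb(K,G)$ with $K=\aut(\mathcal{B})_e$. The role played by the basic set $X_6$ in the previous lemma is now played by $Y_5$ (and possibly $Y_6$): these are the basic sets that ``mix'' the cosets $aC_i$, $bC_i$, $abC_i$ in a way that lets an automorphism permute the orbits $C_i$ of $M$ on $C$ nontrivially. The first step is to pick $c_i\in C_i$ for the relevant indices and locate two elements of a single basic set, say $ac_1$ and $ac_4$ in $Y_5$ (recall $Y_5$ contains $\{a,a^3\}(C_1\cup C_4)$), so that an $\alpha\in K$ with $(ac_1)^\alpha=ac_4$ exists.

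Next I would extract, exactly as in Eqs.~\eqref{c1c}--\eqref{c3}, the action of $\alpha$ on individual orbits $C_i$. Because $C$ is a $\mathcal{B}$-subgroup, $Ca$ is a block of $K$, and intersecting the block with $Y_2$ (which contains $a$) pins down $\{a\}^\alpha=\{a\}$ after checking $(Ca)^\alpha=Ca$. From $\{a\}^\alpha=\{a\}$ together with Eq.~\eqref{aut} and the explicit description of $Y_7$ and $Y_8$ (each of which meets $Ca$ in exactly one $C_i$-coset, namely $aC_3$ and $aC_6$ respectively), I would deduce fixed-orbit relations such as $C_3^\alpha=C_3$ and $C_6^\alpha=C_6$, while the mixing relation coming from $(ac_1)^\alpha=ac_4$ yields $(C_1c_1)^\alpha=C_1c_4$ or the corresponding statement for whichever orbit sits in the $\{a,a^3\}$-part of $Y_5$.

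The final step is the arithmetic contradiction via Lemma~\ref{cycnumb6}, splitting into the parities of $m$. For $m$ even, the preserved intersection number (an equality of the form $c_{i i}^{j_1}=c_{i i}^{j_2}$ obtained by applying $\alpha$ and using $|{\cdot}|$-invariance) is converted through Eq.~\eqref{triangle} into a vanishing of one of the differences $s,t,u$ appearing in Lemma~\ref{cycnumb6}(1); forcing $s=0$ (or the corresponding relation $3s=\pm t\mp 2u$ to collapse) gives $4p=r^2$, contradicting primality of $p$. For $m$ odd I would similarly drive the $m$-formula in Lemma~\ref{cycnumb6}(2) into a form where $m$ is manifestly even, contradicting the oddity of $m$. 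The main obstacle I anticipate is purely combinatorial bookkeeping: with six orbits $C_1,\dots,C_6$ and the more intricate coset distributions in $Y_5,Y_6,Y_7,Y_8$, I must choose the indices and the element $\alpha$ so that the resulting equality among structure constants is precisely one that Lemma~\ref{cycnumb6} can turn into a contradiction. Selecting the right pair of elements in $Y_5$ (versus $Y_6$) so that the fixed orbits and the swapped orbit align with the specific linear combinations $s,t,u$ is the delicate point; everything downstream is a direct transcription of the $l=4$ argument.
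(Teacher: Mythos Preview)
Your outline captures the overall shape correctly, but there is a genuine gap: a \emph{single} automorphism $\alpha$ is not enough here, whereas it was for $l=4$. With your choice $(ac_1)^\alpha=ac_4$ (or, as in the paper, $(ac_1)^\beta=a^3c_1$), the intersections $Y_7a\cap Y_3$ and $Y_8a\cap Y_3$ only see the orbits $C_3$ and $C_6$, and the resulting equality of intersection numbers, after applying Eq.~\eqref{constants1} and Eq.~\eqref{triangle}, reduces to the single relation $c_{12}^1=c_{15}^1$. That relation alone does not make any of $s$, $t$, $u$ in Lemma~\ref{cycnumb6}(1) vanish, nor does it collapse the constraint ``$3s=t=2u$ or $3s=-t-2u$'' to $s=0$. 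You need a \emph{second}, independent relation, and for that the paper uses a second automorphism $\gamma\in L$ coming from $Y_6$, with $(abc_1)^\gamma=a^3bc_1$; this one sees $C_2$ and $C_5$ (via $Y_7ab\cap Y_3$ and $Y_8ab\cap Y_3$) and yields $c_{13}^1=c_{16}^1$. Only with \emph{both} equalities does one get $t=-2u$, which together with either branch of Lemma~\ref{cycnumb6}(1) forces $s=0$ and hence $4p=r^2$. The odd-$m$ case is parallel: two relations $c_{42}^1=c_{45}^1$ and $c_{43}^1=c_{46}^1$ are needed to conclude both $s=0$ and then the evenness of $m$ from the formula in Lemma~\ref{cycnumb6}(2).

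So the ``possibly $Y_6$'' in your plan is not optional, and it is not an alternative to $Y_5$ (``versus'') but must be used in addition. Also note a small indexing slip: with your $\alpha$ fixing $a$, the translated orbit you extract from $Y_8ac_i\cap Y_3$ is $C_3c_i$, not $C_1c_i$; only $C_3$ and $C_6$ appear on the $a$-coset side of $Y_7,Y_8$. Once you add the second automorphism from $Y_6$ and track the two resulting equalities into Lemma~\ref{cycnumb6}, the argument goes through exactly as you intend.
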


\begin{proof}
Assume the contrary. Then $\mathcal{T}=\orb(L,G)$, where $L=\aut(\mathcal{B})_e$. Let $c_1\in C_1$. The elements $ac_1$ and $a^3c_1$ belong to $Y_5$ and the elements $abc_1$ and $a^3bc_1$ belong to $Y_6$. Since $Y_5$ and $Y_6$ are orbits of~$L$, there exist  $\beta,\gamma\in L$ such that $(ac_1)^{\beta}=a^3c_1$ and $(abc_1)^{\gamma}=a^3bc_1$. Eq.~\eqref{aut} implies that 
\begin{equation}\label{aa3}
(C_3c_1)^{\beta}=(Y_8ac_1\cap Y_3)^{\beta}=Y_8a^3c_1\cap Y_3=C_6c_1
\end{equation}
and 
\begin{equation}\label{aba3b}
(C_2c_1)^{\gamma}=(Y_7abc_1\cap Y_3)^{\gamma}=Y_7a^3bc_1\cap Y_3=C_5c_1.
\end{equation}

Since $C$ is a $\mathcal{B}$-subgroup, the sets $Ca$,$Ca^3$, $Cab$, and $Ca^3b$ are blocks of~$L$. Together with $(ac_1)^{\beta}=a^3c_1$ and $(abc_1)^{\gamma}=a^3bc_1$, this yields that $(Ca)^{\beta}=Ca^3$ and $(Cab)^{\gamma}=Ca^3b$. Therefore by Eq.~\eqref{aut}, we have
$$\{a\}^{\beta}=(Ca \cap Y_2)^{\beta}=Ca^3\cap Y_2=\{a^3\}$$
and 
$$\{ab\}^{\gamma}=(Cab \cap Y_2)^{\gamma}=Ca^3b\cap Y_2=\{a^3b\}$$
Due to the above equalities and Eq.~\eqref{aut}, we obtain
\begin{equation}\label{c36}
C_3^{\beta}=(Y_8a\cap Y_3)^{\beta}=Y_8a^3\cap Y_3=C_6,
\end{equation}
\begin{equation}\label{c63}
C_6^{\beta}=(Y_7a\cap Y_3)^{\beta}=Y_7a^3\cap Y_3=C_3,
\end{equation}
and 
\begin{equation}\label{c25}
C_2^{\gamma}=(Y_7ab\cap Y_3)^{\gamma}=Y_7a^3b\cap Y_3=C_5,
\end{equation}
\begin{equation}\label{c52}
C_5^{\gamma}=(Y_8ab\cap Y_3)^{\gamma}=Y_8a^3b\cap Y_3=C_2.
\end{equation}

Suppose that $m$ is even. Then $c_{33}^1=|C_3c_1\cap C_3|$ and $c_{66}^1=|C_6c_1\cap C_6|$. Eqs.~\eqref{aa3} and~\eqref{c36} imply that $c_{33}^1=c_{66}^1$. By the latter equality and Eq.~\eqref{constants1}, we have $c_{11}^{2}=c_{11}^{5}$. Therefore 
\begin{equation}\label{c22c55}
c_{12}^1=c_{15}^1
\end{equation} 
by Eq.~\eqref{triangle}. Since $m$ is even, $c_{22}^1=|C_2c_1\cap C_2|$ and $c_{55}^1=|C_5c_1\cap C_5|$. Eqs.~\eqref{aba3b} and~\eqref{c25} imply that $c_{22}^1=c_{55}^1$. By the latter equality and Eq.~\eqref{constants1}, we have $c_{11}^{3}=c_{11}^{6}$. Therefore 
\begin{equation}\label{c33c66}
c_{13}^1=c_{16}^1
\end{equation} 
by Eq.~\eqref{triangle}. 

The definitions of $t$ and $u$ from Lemma~\ref{cycnumb6}(1) and Eqs.~\eqref{c22c55} and~\eqref{c33c66} yield that $t=-2u$. Together with each of the equalities $3s=t=2u$ and $3s=-t-2u$ (see the second part of Lemma~\ref{cycnumb6}(1)), the latter implies that $s=0$. Therefore $4p=r^2$ for some integer $r$ by the first part of Lemma~\ref{cycnumb6}(1), a contradiction to primality of~$p$.

Now suppose that $m$ is odd. Then $c_{33}^1=|C_6c_1\cap C_3|$ and $c_{66}^1=|C_3c_1\cap C_6|$. Due to Eqs.~\eqref{aa3} and~\eqref{c63}, we obtain $c_{33}^1=c_{66}^1$. By the latter equality and Eq.~\eqref{constants1}, we have $c_{44}^{2}=c_{44}^{5}$. Therefore 
\begin{equation}\label{c42c45}
c_{42}^1=c_{45}^1
\end{equation} 
by Eq.~\eqref{triangle}. Since $m$ is odd, $c_{22}^1=|C_5c_1\cap C_2|$ and $c_{55}^1=|C_2c_1\cap C_5|$. Eqs.~\eqref{aba3b} and~\eqref{c52} imply that $c_{22}^1=c_{55}^1$. By the latter equality and Eq.~\eqref{constants1}, we have $c_{44}^{3}=c_{44}^{6}$. Therefore 
\begin{equation}\label{c43c46}
c_{43}^1=c_{46}^1
\end{equation} 
by Eq.~\eqref{triangle}. 

Due to the definitions of $t$ and $u$ from Lemma~\ref{cycnumb6}(2) and Eqs.~\eqref{c42c45} and~\eqref{c43c46}, we obtain $t=2u$. Together with each of the equalities $3s=-t=2u$ or $3s=t-2u$ (see the second part of Lemma~\ref{cycnumb6}(2)), the latter implies that $s=0$. Therefore 
$$s=c_{42}^1+2c_{51}^1+c_{45}^1-c_{43}^1-2c_{52}^1-c_{46}^1=2c_{45}^1+2c_{51}^1-2c_{43}^1-2c_{52}^1=0,$$
where the second equality holds by Eqs.~\eqref{c42c45} and~\eqref{c43c46}. So $c_{45}^1+c_{52}^1=c_{43}^1+c_{52}^1$. Now from the first part of Lemma~\ref{cycnumb6}(2) it follows that $m=c_{43}^1+c_{45}^1+c_{51}^1+c_{52}^1+2c_{56}^1=2(c_{43}^1+c_{52}^1)+2c_{56}^1$, a contradiction to the oddity of~$m$. 
\end{proof}

Proposition~\ref{q8cp} follows from Lemma~\ref{nonschurq81} if $p\equiv1\mod 4$ and from Lemma~\ref{nonschurq82} if $p\equiv1\mod 6$.

\section{Proof of Theorem~\ref{main}}

We start with a lemma on the structure of nilpotent Schur groups. 

\begin{lemm}\label{necessary}
Let $G$ be a nonabelian nilpotent Schur group. Then $G=P\times H$, where $P$ is isomorphic to one of the groups $G_{16}$, $Q_8$, $D_{2^k}$, $k\geq 3$, and $H$ is a cyclic group of odd order. 
\end{lemm}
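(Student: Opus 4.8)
The plan is to combine three ingredients: the structure theory of nilpotent groups, the hereditary property of the Schur condition (sections and subgroups of Schur groups are Schur), and the known classification results for $p$-groups together with the two nonschurity propositions just established. Since $G$ is nilpotent, it factors as a direct product $G=\prod_i P_i$ of its Sylow subgroups $P_i$. Because $G$ is nonabelian, at least one factor, say the Sylow $2$-subgroup $P=P_2$, must be nonabelian: indeed, by~\cite{Ry1} every Schur $p$-group of odd order is abelian, and a subgroup of a Schur group is Schur, so each odd Sylow subgroup is an abelian Schur $p$-group; hence the nonabelian part must sit at the prime~$2$.

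Next I would pin down the nonabelian Sylow $2$-subgroup $P$. Since $P$ is itself a Schur $2$-group (being a subgroup of $G$), I would invoke~\cite{MP2}, which asserts that every nonabelian Schur $2$-group of order at least~$32$ is dihedral, together with the small-order data from~\cite{Ziv} classifying Schur groups of order at most~$63$. This reduces the nonabelian Sylow $2$-subgroup to the list $Q_8$, $G_{16}$, $D_{2^k}$ with $k\geq 3$. I would also argue that there can be only one nonabelian Sylow factor and that it occurs at the prime~$2$; the odd-order factors are all abelian.

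The remaining task is to show that the odd part $H=\prod_{p\text{ odd}}P_p$ is \emph{cyclic}. Here is where the two propositions of Sections~3 and~4 enter, via the hereditary property. If $H$ were noncyclic, it would contain a noncyclic abelian subgroup, hence a subgroup of the form $\mathbb{Z}_p\times\mathbb{Z}_p$ for some odd prime~$p$, or more to the point the section structure would force a forbidden direct factor. More directly, I expect to use the necessary conditions for schurity of nilpotent groups from~\cite[Corollary~4.3]{PV} (flagged in the introduction as a key ingredient) to constrain the odd part to be cyclic, and then to rule out the surviving cases where $P\in\{D_8,Q_8\}$ is multiplied by a cyclic $p$-group: Propositions~\ref{d8cp} and~\ref{q8cp} show that $D_8\times\mathbb{Z}_p$ is never Schur and that $Q_8\times\mathbb{Z}_p$ fails to be Schur whenever $p\equiv1\bmod 4$ or $p\equiv1\bmod 6$. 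Since every section of $G$ is Schur, the presence of such a forbidden section would contradict schurity of~$G$, forcing $H$ cyclic and consistent with the claimed form $G=P\times H$.

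The main obstacle I anticipate is establishing cyclicity of the odd part cleanly, rather than merely excluding individual primes. Bounding the number of prime divisors and ruling out $\mathbb{Z}_p\times\mathbb{Z}_p$-type subgroups requires the external result~\cite[Corollary~4.3]{PV}; the delicate point is to ensure that the interaction between a nonabelian $2$-Sylow $P$ and the odd part always produces a section isomorphic to one of the two forbidden products $D_8\times\mathbb{Z}_p$ or an obstructed $Q_8\times\mathbb{Z}_p$, so that nonschurity of that section propagates upward. Verifying that these sections genuinely arise — in particular that a nonabelian $P$ has $D_8$ or $Q_8$ as an appropriate section and that the tensor/section structure over $G$ realizes the forbidden product — is the step I would expect to demand the most care.
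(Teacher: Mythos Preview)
Your core plan matches the paper's: decompose $G$ as a product of its Sylow subgroups, use \cite[Theorem~1]{Ry1} to force the nonabelian factor to be the Sylow $2$-subgroup $P$, use \cite[Theorem~4.2]{PV} and \cite[Theorem~1.2]{MP2} to pin $P$ down to $Q_8$, $G_{16}$, or $D_{2^k}$, and then invoke \cite[Corollary~4.3]{PV} to conclude that the Hall $2'$-subgroup $H$ is cyclic. That is the entire proof of Lemma~\ref{necessary}; nothing more is required.

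Where you drift off course is in pulling Propositions~\ref{d8cp} and~\ref{q8cp} into the argument. They play no role here: the lemma asserts only the decomposition $G=P\times H$ with $P$ on the stated list and $H$ cyclic of odd order; it does \emph{not} further restrict which pairs $(P,H)$ can occur. The additional narrowing you sketch---ruling out $D_8\times\mathbb{Z}_p$ and constraining the primes for $Q_8\times\mathbb{Z}_p$---belongs to the proof of Theorem~\ref{main}, not to this lemma. Note also that those two propositions would not by themselves yield cyclicity of $H$: if $P\cong Q_8$ and $H$ contained $\mathbb{Z}_p\times\mathbb{Z}_p$ for a prime with $p\not\equiv 1\pmod{4}$ and $p\not\equiv 1\pmod{6}$, neither proposition produces a forbidden section. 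Cyclicity of $H$ comes directly from \cite[Corollary~4.3]{PV}, so the ``main obstacle'' you anticipate dissolves as soon as you cite it, and the proof is only a few lines long.
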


\begin{proof}
Since $G$ is nilpotent, it is a direct product of its Sylow subgroups. At least one of the Sylow subgroups of~$G$, say Sylow $p$-subgroup $P$, is nonabelian, because $G$ is nonabelian. The group $P$ is Schur as a subgroup of the Schur group~$G$. From~\cite[Theorem~4.2]{PV}, \cite[Theorem~1.2]{MP2}, and~\cite[Theorem~1]{Ry1} it follows that $p=2$ and $P$ is isomorphic to one of the groups $G_{16}$, $Q_8$, $D_{2^k}$, $k\geq 3$. Let $H$ be the Hall $2^{\prime}$-subgroup of $G$. Then $G=P\times H$. Due to~\cite[Corollary~4.3]{PV}, the group $H$ is cyclic.
\end{proof}

We continue with one more lemma on the groups $Q_8\times \mathbb{Z}_{pq}$.
 
\begin{lemm}\label{q8cpq}
The group $Q_8\times \mathbb{Z}_{pq}$ is not Schur for every odd primes~$p$ and~$q$. 
\end{lemm}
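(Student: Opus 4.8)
The plan is to reduce the claim to the single-prime case already handled in Proposition~\ref{q8cp}. The group $Q_8\times\mathbb{Z}_{pq}$ has sections $Q_8\times\mathbb{Z}_p$ and $Q_8\times\mathbb{Z}_q$ (take the quotient by the Sylow $q$- resp.\ $p$-subgroup of the cyclic factor, which is a characteristic and hence $\mathcal{A}$-admissible direct factor). Since a section of a Schur group is Schur, if either $Q_8\times\mathbb{Z}_p$ or $Q_8\times\mathbb{Z}_q$ is non-Schur, we are done immediately. By Proposition~\ref{q8cp}, the factor $Q_8\times\mathbb{Z}_p$ is already non-Schur whenever $p\equiv 1\pmod 4$ or $p\equiv 1\pmod 6$. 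So the only case requiring genuine work is when \emph{both} $p$ and $q$ fail both congruences, i.e.\ $p,q\not\equiv 1\pmod 4$ and $p,q\not\equiv 1\pmod 6$.

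For that remaining case the plan is to construct a nonschurian $S$-ring over $G=Q_8\times\mathbb{Z}_{pq}$ directly, imitating the construction of Section~4 but exploiting the extra arithmetic available modulo $pq$. The idea is that although neither $\mathbb{Z}_p$ nor $\mathbb{Z}_q$ alone admits the relevant order-$4$ or order-$6$ cyclotomic partition used in Lemmas~\ref{nonschurq81} and~\ref{nonschurq82}, the automorphism group $\aut(\mathbb{Z}_{pq})\cong\mathbb{Z}_{p-1}\times\mathbb{Z}_{q-1}$ is larger and may contain a subgroup $M$ whose orbit structure on $\mathbb{Z}_{pq}^\#$ produces an index-$4$ or index-$6$ configuration of the type $C_1,\dots,C_l$ that was the engine of Section~4. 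I would first identify, from the hypothesis that $p,q$ avoid the two congruences, a divisor $l\in\{4,6\}$ of $\mathrm{lcm}(p-1,q-1)$ together with a subgroup $M\le\aut(C)$ of index $l$ (here $C\cong\mathbb{Z}_{pq}$), and verify that the cyclic group $\aut(C)/M$ acts regularly on the $l$ nontrivial $M$-orbits, reproducing the combinatorial setup preceding Lemma~\ref{cycnumb4}. With this $M$ in hand, I would define the basic sets $X_i$ (for $l=4$) or $Y_j$ (for $l=6$) by exactly the same formulas as in Section~4 and define $\mathcal{A}$ (resp.\ $\mathcal{B}$) accordingly.

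Once the partition is set up, the verification that $\mathcal{A}$ is an $S$-ring goes through verbatim as in Section~4: the subdirect product $K_1$ (resp.\ $K_2$) of a $Q_8$-automorphism group with $W=\aut(C)$ gives the cyclotomic $S$-ring whose basic sets are all the $X_i$ with $X_7,X_8$ merged, and the remaining products $\underline{X_iX_j}$ with $i$ or $j$ in $\{7,8\}$ are handled by Eq.~\eqref{constants1}. The nonschurity argument of Lemmas~\ref{nonschurq81} and~\ref{nonschurq82} also transfers directly, because it only uses the structure-constant identities of Lemmas~\ref{cycnumb4} and~\ref{cycnumb6}, whose hypotheses are about the cyclotomic numbers of a single prime. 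Here lies the main obstacle: the cyclotomic-number identities in those lemmas come from classical formulas valid over $\mathbb{Z}_p$ with $p$ prime, and $\mathbb{Z}_{pq}$ is not a field, so the invocation $p=r^2+4s^2$ (and its order-$6$ analogue) is no longer available in the same form. I therefore expect the argument will \emph{not} go through by this ``larger $M$'' route in the fully composite case, and the genuinely correct plan is the first one: show that the congruence hypotheses of Theorem~\ref{main} force at least one of $p,q$ to satisfy $p\equiv1\pmod4$ or $p\equiv1\pmod6$, so that Proposition~\ref{q8cp} applies to a section. Concretely, the hardest step is the arithmetic bookkeeping establishing that one cannot have two distinct odd primes $p,q$ both simultaneously avoiding both congruences while still allowing $Q_8\times\mathbb{Z}_{pq}$ to be Schur; I would handle this by a short case analysis on residues of $p,q$ modulo $12$, reducing $Q_8\times\mathbb{Z}_{pq}$ to one of its non-Schur sections in every case.
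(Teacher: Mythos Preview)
Your plan has a genuine gap. The arithmetic claim at the end is false: there are infinitely many odd primes avoiding both congruences $p\equiv 1\pmod 4$ and $p\equiv 1\pmod 6$ --- in fact exactly the primes $p\equiv 11\pmod{12}$ together with $p=3$. So for, say, $p=11$ and $q=23$ (or $p=q=11$, since the lemma does not require $p\neq q$), neither section $Q_8\times\mathbb{Z}_p$ nor $Q_8\times\mathbb{Z}_q$ is covered by Proposition~\ref{q8cp}, and no case analysis modulo~$12$ will change that. Indeed, these are precisely the primes appearing in item~(2) of Theorem~\ref{main}, where $Q_8\times\mathbb{Z}_p$ is a \emph{candidate} Schur group; the paper leaves open whether they are Schur. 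Your second idea, the cyclotomic construction over $\mathbb{Z}_{pq}$, you correctly diagnose as broken: the Dickson identities used in Lemmas~\ref{cycnumb4} and~\ref{cycnumb6} require a prime field and have no analogue over $\mathbb{Z}_{pq}$.

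The paper's proof proceeds by an entirely different mechanism that sidesteps all congruence conditions. It takes the nonschurian $S$-ring $\mathcal{A}$ over the \emph{abelian} group $G=\mathbb{Z}_4\times\mathbb{Z}_2\times\mathbb{Z}_{pq}$ constructed in~\cite[Theorem~4.1]{EKP2} (valid for all odd $p,q$), observes that $\mathcal{A}$ is a suitable generalized wreath product so that a certain $\sigma\in\aut(G)$ lies in $\aut(\mathcal{A})$, and then checks directly that $\langle\sigma a_r,\sigma b_r\rangle\cong Q_8$ acts regularly on the $\mathbb{Z}_4\times\mathbb{Z}_2$ factor. Hence $\aut(\mathcal{A})$ contains a regular subgroup isomorphic to $Q_8\times\mathbb{Z}_{pq}$, so $\mathcal{A}$ is isomorphic to a (necessarily nonschurian) $S$-ring over $Q_8\times\mathbb{Z}_{pq}$. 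The key idea you are missing is this transfer of nonschurity from the abelian group to $Q_8\times\mathbb{Z}_{pq}$ via a regular subgroup of $\aut(\mathcal{A})$.
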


\begin{proof}
Let $A\cong \mathbb{Z}_4$, $B\cong \mathbb{Z}_2$, and $C\cong \mathbb{Z}_{pq}$. Let $a$, $b$, and $c$ be generators of $A$, $B$, and $C$, respectively. Put $a_1=a^2$, $A_1=\langle a_1\rangle$, $c_1=c^q$, $C_1=\langle c_1 \rangle\cong C_p$, and $G=A\times B\times C\cong \mathbb{Z}_4 \times \mathbb{Z}_2\times \mathbb{Z}_{pq}$. To prove Proposition~\ref{q8cpq}, we will show that the nonschurian $S$-ring $\mathcal{A}$ over $G$ constructed in~\cite[Theorem~4.1]{EKP2} is isomorphic to an $S$-ring over $Q_8\times \mathbb{Z}_{pq}$. 

Let us recall that $\mathcal{A}$ in the $(A\times C)/A_1$-wreath product in our notations (see~\cite[pp.~8-10]{EKP2}). Therefore $\sigma\in \aut(G)$ such that $\sigma:(a,b,c)\mapsto (a,a_1b,c)$ is an automorphism of~$\mathcal{A}$. Since $G_r\leq \aut(\mathcal{A})$, we conclude that $\sigma a_r,\sigma b_r\in \aut(\mathcal{A})$. The straightforward computation implies that $|\sigma a_r|=|\sigma b_r|=4$, $(\sigma a_r)^2=(\sigma b_r)^2$, and $(\sigma a_r)^{\sigma b_r}=(\sigma a_r)^{-1}$. So $H=\langle \sigma a_r,\sigma b_r\rangle \cong Q_8$. Moreover, one  can easily verify that $H$ is transitive and hence regular on $A\times B$. Thus, $H\times C_r$ is a regular subgroup of $\aut(\mathcal{A})$ isomorphic to~$Q_8\times \mathbb{Z}_{pq}$. The latter yields that $\mathcal{A}$ is isomorphic to an $S$-ring over~$Q_8\times \mathbb{Z}_{pq}$. Since $\mathcal{A}$ is nonschurian by~\cite[Theorem~4.1]{EKP2}, the group $Q_8\times \mathbb{Z}_{pq}$ is not Schur. 
\end{proof}

\begin{proof}[Proof of Theorem~\ref{main}]

Schurity of the groups $Q_8$, $G_{16}$, and $D_{2^k}$, where $3\leq k\leq 5$, follows from~\cite[Theorem~4.2]{PV}. Let $G$ be a nonabelian nilpotent Schur group. Then $G=P\times H$, where $P$ is isomorphic to one of the groups $G_{16}$, $Q_8$, $D_{2^k}$, $k\geq 3$, and $H$ is a cyclic group of odd order be Lemma~\ref{necessary}. We may assume that $H$ is nontrivial because for otherwise the statement of the theorem holds. If $P$ is isomorphic to $D_{2^k}$, $k\geq 3$, or $G_{16}$, then $H$ has a subgroup isomorphic to~$D_8$ and hence $G$ has a subgroup isomorphic to $D_8\times \mathbb{Z}_p$ for some odd prime~$p$. The latter group is not Schur by Proposition~\ref{d8cp}. Therefore $G$ is not Schur, a contradiction to the assumption.

The above paragraph yields that $P$ is isomorphic to~$Q_8$. If $|H|$ has at least two prime divisors (possibly, equal), then $H$ has a subgroup isomorphic to $\mathbb{Z}_{pq}$ for some odd primes $p$ and $q$ and hence $G$ has a subgroup isomorphic to $Q_8\times \mathbb{Z}_{pq}$. The latter group is not Schur by Lemma~\ref{q8cpq}. Therefore $G$ is not Schur, a contradiction to the assumption. Thus, $H$ is a cyclic group of odd prime order and we are done by~\cite[Lemma~3.1]{PV} if $p\leq 3$ and by Proposition~\ref{q8cp} otherwise.
\end{proof}


\begin{thebibliography}{list}


\bibitem{BPR}
\emph{R. Bildanov, V. Panshin, G. Ryabov}, On WL-rank and WL-dimension of some Deza circulant graphs, Graphs Combin., \textbf{37}, No.~6 (2021), 2397--2421.


\bibitem{Di}
\emph{L.~E.~Dickson}, Cyclotomy, higher congruences, and Waring's problem, Amer. J. Math., \textbf{57} (1935), 391--424.


\bibitem{EKP1}
\emph{S.~Evdokimov, I.~Kov\'acs, I.~Ponomarenko}, Characterization of cyclic Schur groups, St. Petersburg Math. J., {\bf 25}, No.~5 (2014), 755--773.

\bibitem{EKP2}
\emph{S.~Evdokimov, I.~Kov\'acs, and I.~Ponomarenko}, On schurity of finite abelian groups, Commun. Algebra, {\bf 44}, No.~1 (2016), 101--117.


\bibitem{GNP}
\emph{J.~Golfand, N.~Najmark,~R.~P\"{o}schel}, The structure of $S$-rings over $\mathbb{Z}_{2^m}$,  Preprint P-01/85 Akad. der Wiss. der DDR, ZIMM, Berlin (1985). 



\bibitem{KP}
\emph{M.~Klin, P\"{o}schel}, The K\"{o}nig problem, the isomorphism problem for cyclic graphs and the method of Schur rings, in: ``Algebraic Methods in Graph Theory, Szeged, 1978'', Colloq. Math. Soc. J\'anos Bolyai, \textbf{25}, North-Holland, Amsterdam (1981), 405--434.




\bibitem{MP2}
\emph{M.~Muzychuk, I.~Ponomarenko}, On Schur $2$-groups, J. Math. Sci. (N.-Y.), \textbf{219}, No.~4 (2016), 565--594.

\bibitem{PR}
\emph{I.~Ponomarenko, G.~Ryabov}, Abelian Schur groups of odd order, Sib. Elect. Math. Reports, \textbf{15} (2018), 397--411.

\bibitem{PV}
\emph{I.~Ponomarenko, A.~Vasil'ev}, On nonabelian Schur groups, J. Algebra Appl., \textbf{13}, No.~8, Article ID 1450055 (2014).

\bibitem{Po}
\emph{R.~P\"{o}schel}, Untersuchungen von s-ringen insbesondere im gruppenring von p-gruppen, Math. Nachr. \textbf{60} (1974), 1--27.


\bibitem{Ry1}
\emph{G. Ryabov}, On Schur $3$-groups, Sib. Electron. Math. Rep., \textbf{12} (2015), 223--331.

\bibitem{Ry2}
\emph{G.~Ryabov}, On Schur p-groups of odd order, J. Algebra Appl., \textbf{16}, No.~3, Article ID 1750045 (2017).


\bibitem{Ry3}
\emph{G.~Ryabov}, Classification of abelian Schur groups, in preparation.

\bibitem{Schur}
\emph{I.~Schur}, Zur theorie der einfach transitiven Permutationgruppen, S.-B. Preus Akad. Wiss. Phys.-Math. Kl., \textbf{18}, No.~20 (1933), 598--623.



\bibitem{Wi}
\emph{H.~Wielandt}, Finite permutation groups, Academic Press, New York - London (1964).

\bibitem{Wi2}
\emph{H.~Wielandt}, Permutation groups through invariant relations and invariant functions, Lect. Notes Dept. Math. Ohio St. Univ., Columbus (1969).



\bibitem{Ziv}
\emph{M. Ziv-Av}, Enumeration of Schur rings over small groups, Lecture Notes in Computer Science, \textbf{8660} (2014), 491--500.

\end{thebibliography}
\end{document}